\newtheorem*{thm*}{Theorem}
\newtheorem*{conj*}{Conjecture}
\newtheorem{thm}{Theorem}[section]
\newtheorem{obs}[thm]{Observation}
\newtheorem*{rmk}{Remark}
\theoremstyle{definition}
\newtheorem{dfn}{Definition}[section]
\newtheorem{lem}{Lemma}[section]
\newcommand\restr[2]{{
  \left.\kern-\nulldelimiterspace 
  #1 
  \vphantom{\big|} 
  \right|_{#2} 
 }}
\numberwithin{equation}{section}
\begin{document}

\begin{abstract}
    In this paper, we exhibit an irreducible Markov chain $M$ on the edge $k$-colorings of bipartite graphs based on certain properties of the solution space. We show that diameter of this Markov chain grows linearly with the number of edges in the graph. We also prove a polynomial upper bound on the inverse of acceptance ratio of the Metropolis-Hastings algorithm when the algorithm is applied on $M$ with the uniform distribution of all possible edge $k$-colorings of $G$. A special case of our results is the solution space of the possible completions of Latin rectangles.
\end{abstract}

\begin{keyword}
Edge Colorings, Latin Squares, MCMC, Metropolis-Hastings Algorithm
\end{keyword}

\begin{frontmatter}
\title{A Markov chain on the solution space of edge-colorings of bipartite graphs}
\author[mit]{Letong Hong}
\author[renyi,sztaki]{Istv\'an Mikl\'os}

\address[mit]{Department of Mathematics, Massachusetts Institute of Technology \\{\tt email:} clhong@mit.edu}

\address[renyi]{Alfr\'ed R\'enyi Institute of Mathematics, E\"otv\"os Lor\'and Research Network \\ {\tt email:} miklos.istvan@renyi.hu}

\address[sztaki]{Institute for Computer Science and Control (SZTAKI), E\"otv\"os Lor\'and Research Network}

\end{frontmatter}

\section{Introduction}

\begin{sloppypar}
 The edge $k$-coloring problem asks if there is a coloring ${C: E \rightarrow \{c_1, c_2, \ldots, c_k\}}$ of a simple graph $G= (V,E)$ such that for any adjacent edges $e_1$ and $e_2$, $C(e_1) \ne C(e_2)$. 
 In this paper, we consider the solution space of the edge $k$-coloring problem of bipartite graphs. 
 Edge colorings of bipartite graphs have a variety of applications: the round-robin tournament \cite{Burkeetal2004, Skiena2008}, where the two vertex sets correspond to players from two parties, the pre-determined edges correspond to different kinds of games, and the edge colors correspond to index of the round; the open shop scheduling problem \cite{Williamsonetal1997}, where one vertex set corresponds to the list of objects to be manufactured and the other corresponds to the machines, the edges correspond to existing tasks, and the edge colors correspond to distinct categories of tasks; the path coloring problem in fiber-optic communication \cite{ErlebachJansen2001}, where edges correspond to pairs of nodes (belonging to two channels) with communication needs and edge colors correspond to frequency of light, under the restriction that no two paths that share a common piece of fiber can use the same frequency, to prevent interference.
 
 In general, the edge $k$-coloring problem is NP-complete \cite{Holier1981}. It remains NP-complete if the problem is restricted to edge $3$-coloring of $3$-regular simple graphs \cite{Holier1981}, and this hardness result has been extended to edge $r$-coloring of $r$-regular graphs for all $r\geq 3$ \cite{LevenGalil1983}.
 
 By Vizing's theorem \cite{Vizing1964}, every graph is edge $k$-colorable with $k \ge \Delta+1$ colors, where $\Delta$ is the maximum degree of the graph. Vizing also proved that planar graphs with $\Delta \ge 8$ are edge $\Delta$-colorable \cite{Vizing1965}. Sanders and Zhao \cite{SandersZhao2001} showed that planar graphs with $\Delta = 7$ are also edge $7$-colorable. For cubic planar graphs, it can be decided in polynomial time if they are edge $3$-colorable or edge $4$-colorable \cite{Tait1978,Robertsonetal1996}. The optimal edge coloring of planar graphs with $4 \le \Delta \le 6$ remains an open question.
 
 Based on the theorem of K\"onig \cite{Konig1916}, every bipartite graph is edge $\Delta$-colorable. Efficient algorithms exists for finding such an edge $\Delta$-coloring in bipartite graphs \cite{ColeHopcroft1982,KapoorRizzi2000}.
 
  Although efficient algorithms exist for finding one particular solution, such a solution might have a bias inherited from the construction algorithm. In that case, one one would like to generate a random solution uniformly.
  Finding a random completion of a Latin rectangle might also be needed in partially redesigning an experimental design \cite{Lakic}. Sampling completions of Latin rectangles is also necessary to give good stochastic estimations on the number of Latin squares of a given size \cite{Aksenetal2017}.
 
 Sampling and counting are related problems.  For many counting problems, approximate counting and approximate uniform sampling have the same computational complexity \cite{JerrumValiantVazirani1986}. Therefore it is natural to consider the problem of counting the solutions to the edge $k$-coloring problem.
 It is easy to develop a dynamic programming algorithm to compute how many edge $k$-colorings are there in a graph with $\Delta \le 2$. On the other hand, computing the number of edge $k$-colorings for $r$-regular planar graphs is \#P-complete for all $k\ge r \ge 3$ \cite{Caietal2016}. The complexity of counting edge colorings of bipartite graphs remains open, although it is natural to conjecture that this counting problem is hard for bipartite graphs, too. Indeed, there is no known easily computable formula even for a very specific case, the number of Latin squares, which can be considered as edge $n$-colorings of the complete bipartite graph $K_{n,n}$.

 Even when the exact counting of combinatorial objects is proven to be \#P-complete, there might be efficient algorithms to approximate the number of such objects \cite{Jerrum2003,Miklos2019}.
 Therefore, it is natural to consider approximate sampling of solutions to the edge $k$-coloring problem. Markov chain Monte Carlo methods \cite{Metropolisetal1953,Hastings1970} have been widely used for approximate sampling \cite{Jerrum2003,Miklos2019}.
 
 In the Markov chain Monte Carlo framework, a Markov chain is designed that converges to the uniform distribution of the combinatorial objects. The standard way is to use the Metropolis-Hastings algorithm \cite{Metropolisetal1953,Hastings1970}, which tailors a primary Markov chain to another one converging to a prescribed equilibrium distribution. In each step of the Metropolis-Hastings algorithm, a new state of the Markov chain is proposed from the current state based on the primary Markov chain. This proposed state is accepted with a probability computed from the desired equilibrium distribution and the transition probabilities of the primary Markov chain. Such a Markov chain can be efficiently used for approximate sampling if 
 the convergence time grows only polynomially with the size of the problem instance.
 
 Jacobson and Matthews introduced a Markov chain that converges to the uniform distribution of Latin squares \cite{JacobsonMatthew1996}. Although it is conjectured that the Jacobson-Matthew Markov chain is rapidly mixing, it is not proved yet. A Markov chain similar to the Jacobson-Matthew Markov chain was developed by Aksen \emph{et al.} \cite{Aksenetal2017} that converges to the uniform distribution on the half-regular factorizations of complete bipartite graphs. Both Markov chains have large perturbations. That is, in the Jacobson-Matthew Markov chain, three not necessarily consecutive rows of a Latin square are changed at each step to get a new Latin square. Furthermore, the number of entries that might be changed in these three rows are not bounded. Similarly, the Markov chain developed by Aksen \emph{et al.} changes the color of edges incident to three vertices in one vertex class and an unbounded number of vertices on the other vertex class. Examples are known when the large perturbations cause small acceptance ratios in the Metropolis-Hastings algorithm and thus torpid mixing of the Markov chain \cite{mms2010}. However, it is proved that the inverse of the acceptance ratio is upper bounded by a polynomial of the size of the problem instance both in case of the Jakobson-Matthew Markov chain and in case of the Markov chain developed by Aksen \emph{et al.} Furthermore, small diameter is proved for both Markov chains. Although these properties do not guarantee rapid mixing, they strengthen the conjecture of rapid mixing.
 
 In this paper, we give perturbations that are sufficient to transform any edge $k$-coloring of a bipartite graph $G= (V,E)$ to another edge $k$-coloring of the same graph. These transformations can be the transition kernels of a Markov chain, and in fact, we construct such a Markov chain, $M(G,k)$, on which the following properties are proved. 
 \begin{enumerate}[label=(\alph*)]
     \item The diameter of $M(G,k)$ grows only linearly with $|E|$. More specifically, $6|E|$ perturbations are sufficient to transform any edge coloring of $G$ to any another one.
     \item When the Metorpolis-Hastings algorithm is applied on $M(G,k)$ to converge to the uniform distribution of edge $k$-colorings of $G$, the inverse of the acceptance ratio is upper bounded by a polynomial of the number of vertices in $G$. More specifically, the inverse of the acceptance ratio is upper bounded by $96|V|^2(|V|-1)$.
 \end{enumerate}
 The main contribution of our paper is to show that the perturbations invented by Jacobson and Matthew for the regular, $1$-factorization of the complete bipartite graphs then improved by Aksen \emph{et al}. for the half regular factorizations of the complete bipartite graph can be extended to non-regular and non-complete bipartite graphs.
 
 A special case of our results is the completions of Latin rectangles or equivalently, the edge $k$-coloring of $k$-regular bipartite graphs. In that special case, we can prove better results on both the diameter and on the upper bound of the inverse of the acceptance rate. Particularly, the diameter is upper bounded by $3|E|$, and the inverse of the acceptance ratio is upper bounded by $16|V|(|V|-1)$.

The paper is organized as follows. In Section \ref{section2}, we introduce basic definitions and propositions related to edge colorings and Markov chain Monte Carlo, in particular the Metropolis-Hastings algorithm. In Section \ref{section3}, we then explicitly construct the chain $M$ on the solution space of edge colorings of bipartite graphs and prove our main theorem that $M$ has the properties described above. Finally we dedicate Section \ref{section4} to apply our result to completion of Latin rectangles.

 \end{sloppypar}
 

\section{Preliminaries}\label{section2}

In this section, we introduce the almost edge $k$-colorings and an imprtant lemma on them. We also give a short description to the Metropolis-Hastings algorithm.

\subsection{Almost edge $k$-colorings}

We define formally edge $k$-colorings and almost edge $k$-colorings:
 \begin{dfn}\label{def:edge-coloring}
An \emph{edge $k$-coloring} of a graph $G=(V,E)$ is a map $C: E(G)\to \{c_1,c_2,\ldots, c_k\}$ such that there is no adjacent monochromatic edges.

In a map $C: E(G) \to \{c_1,c_2,\ldots, c_k\}$, a vertex $v\in V$ has a \emph{$(+c-c')$-deficiency} if there are exactly two $c$ edges incident to $v$, all other incident edges have different colors, and there is no $c'$ edge incident to $v$.

An \emph{almost edge $k$-coloring} of $G = (V,E)$ is a map $C: E(G)\to \{c_1,c_2,\ldots, c_k\}$ such that at most two vertices have deficiency and  all other vertices have no adjacent monochromatic edges.

If $c \in \{c_1, c_2, \ldots, c_k\}$ and $C$ is a coloring of $G$ then $\restr{G}{C,c}$ is the subgraph that contains only the edges colored by $c$.
\end{dfn}

Observe that a deficient vertex in an almost edge $k$-coloring might have both $(+c-c')$-deficiency and $(+c-c'')$-deficiency for some colors $c'\ne c''$ if the degree of the vertex is smaller than $k$. 

We show how to transform almost edge $k$-colorings to edge $k$-colorings.
\begin{lem}\label{lem:transf-from-almost-to-proper}
Let $C$ be an almost edge $k$-coloring of a bipartite graph $G$ with one or two deficient vertices. Then $C$ can be transformed to an edge $k$-coloring by flipping colors along at most two alternating walks with some colors $c$ and $c'$.
\end{lem}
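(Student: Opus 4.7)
The plan is to use the classical Kempe-chain technique: for a vertex with a $(+c-c')$-deficiency, look at the subgraph $H$ whose edges are colored $c$ or $c'$, follow a maximal alternating walk from the deficient vertex in $H$, and flip the colors of its edges.

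I would first handle the single deficient vertex case. If $v$ has a $(+c-c')$-deficiency, then in $H$ the vertex $v$ has degree $2$ with both incident edges colored $c$, while every other vertex has degree at most $2$ in $H$ with at most one edge of each color. So $H$ is a disjoint union of paths and cycles, and bipartiteness forces every cycle in $H$ to alternate between the two colors; thus no cycle passes through $v$, since such a cycle would require $v$ to have a $c'$-edge. The component of $v$ in $H$ is therefore a path, and following one of the two $c$-edges at $v$ produces a well-defined alternating walk $W$. Flipping the colors along $W$ turns exactly one $c$-edge at $v$ into a $c'$-edge (removing the deficiency), preserves the color multiset at every interior vertex of $W$ (each such vertex has one $c$- and one $c'$-edge which are swapped), and at the terminal endpoint $u$ merely replaces the unique walk-edge at $u$ by a color $u$ did not previously have---the walk terminated there precisely because $u$ lacked the next alternating color---so no new conflicts are created.

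For two deficient vertices $v_1,v_2$ with $(+c_1-c'_1)$- and $(+c_2-c'_2)$-deficiencies, I would apply the single-vertex procedure first to $v_1$ and then, if needed, to $v_2$. When the color pairs are disjoint, the two walks live in edge-disjoint subgraphs and do not interact, so two independent walks do the job. When the color pairs overlap, the alternating $(c_1,c'_1)$-walk from $v_1$ may pass through or terminate at $v_2$. At any vertex which the walk enters and then exits (including $v_2$ when it appears in the interior of the walk) the local color multiset is preserved, so $v_2$'s deficiency type does not change. Only if the walk terminates at $v_2$ can $v_2$'s multiset change, but the freedom to start the walk along either of the two $c_1$-edges at $v_1$, combined with a short case analysis over how $\{c_1,c'_1\}$ and $\{c_2,c'_2\}$ overlap, shows that a good starting choice always leaves an almost $k$-coloring with at most one remaining deficiency after the first flip; that residual deficiency is then removed by a second alternating walk in the appropriate two-color subgraph. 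In the especially favorable case where $v_1,v_2$ lie in the same component of the common $(c_1,c'_1)$-subgraph and their deficiencies match up (for instance $c_2=c'_1$ and $c'_2=c_1$), a single walk between $v_1$ and $v_2$ already fixes both.

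The main obstacle is this last scenario in which the first alternating walk ends at $v_2$ and threatens to produce a new monochromatic pair there. I would handle it by using the two-fold choice of direction at $v_1$ and, if necessary, by re-selecting the color pair for the second walk based on the updated deficiency at $v_2$ after the first flip; multiset preservation at interior walk-vertices guarantees that nothing bad happens away from the walks' endpoints, so the entire analysis reduces to controlling what happens at the single terminal vertex of each walk.
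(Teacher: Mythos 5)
Your overall strategy is the paper's: flip a maximal alternating (Kempe) walk in the two-color subgraph starting from a deficient vertex, then finish with a second walk whose color pair is re-selected according to the updated deficiency. Your single-deficiency analysis is correct, including the bipartite-parity argument that no cycle of $H$ passes through $v$, and you correctly identify the terminal vertex of the walk as the only place where the local color multiset can change.

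The gap is in the two-deficiency case, which is exactly where the content of the lemma lies. There the $\{c_1,c'_1\}$-subgraph $H$ need \emph{not} have maximum degree $2$: if $v_2$ is $(+c_1-c'')$-deficient with $c''\neq c'_1$, it has degree $3$ in $H$, so the maximal walk from $v_1$ can pass through $v_2$ once and then \emph{terminate at a second visit} to $v_2$. This is the decisive case (it is why the lemma says ``walks'' rather than ``paths''), and your proposal covers it only with the assertion that ``a short case analysis \dots shows that a good starting choice'' leaves at most one deficiency; that analysis is never carried out, and the appeal to a good choice of starting $c_1$-edge at $v_1$ is a red herring --- no choice is needed, and if it were, you would have to do the same analysis to show a good choice exists. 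What must be checked (and what the paper checks) is: (i) the walk can never return to $v_1$, since $v_1$ has no $c'_1$-edge, so a return would close an odd closed walk in a bipartite graph; (ii) if the walk terminates at a first visit to $v_2$, the flip either removes $v_2$'s deficiency (when the repeated color at $v_2$ is the arrival color) or merely gives $v_2$ a color it lacked, leaving its unrelated repeated color as the single remaining deficiency; (iii) if the walk terminates at a second visit to $v_2$, all three $H$-edges at $v_2$ are flipped, turning the multiset $\{c_1,c_1,c'_1\}$ into $\{c'_1,c'_1,c_1\}$, i.e.\ converting a $(+c_1-c'')$-deficiency into a $(+c'_1-c'')$-deficiency rather than creating a vertex with two repeated colors; the second walk, taken in the $\{c'_1,c''\}$-subgraph, then finishes. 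Until (i)--(iii) are written out, your claim that the first flip always leaves an almost edge $k$-coloring with at most one deficient vertex is unsupported, and that claim is the heart of the lemma.
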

\begin{proof}
Let $v_1$ be a $(+c-c')$-deficient vertex, and let $e$ and $f$ be its incident edges with color $c$. Consider the longest alternating walk with colors $c$ and $c'$ that contains $e$. We claim that this walk is a path or a walk that ends in the second visit of the other deficient vertex. Particularly, it cannot return to $v_1$. Indeed, returning to any vertex other than $v_1$ would mean a deficient vertex. However, above $v_1$, there is at most one more deficient vertex, $v_2$. If the walk returns to $v_2$, then let $e_1$ be the edge with which the walk arrives at the first visit of $v_2$, and let $e_2$ be the edge with which the walk arrives at the second visit of $v_2$. We claim that $e_1$ and $e_2$ has the same color since the graph is bipartite.

Also, this walk cannot return to $v_1$. It could be only with a $c$-edge, however, this would mean an odd cycle, which is a contradiction in a bipartite graph.

Therefore,  if this longest alternating walk is a path, that starts with edge $e$, and ends in a vertex either with a $c$-edge, and then this end vertex does not have an incident $c'$ edge or ends in a vertex with a $c'$ edge, and then this end vertex does not have a $c$ edge. By flipping the colors along this path, the $(+c-c')$-deficiency of $v_1$ is eliminated, and no new deficiency is introduced.

If this longest alternating walk ends at the first visit of the other deficient vertex, $v_2$, then $v_2$ has either $(+c-c')$ or $(+c'-c)$ deficiency. By flipping the colors along this path, both the $(+c-c')$-deficiency of $v_1$ an the deficiency of $v_2$ are eliminated, and no new deficiency is introduced.

If this longest alternating walk ends at the second visit of $v_2$, then flipping the colors along this walk eliminates the $(+c-c')$-deficiency of $v_1$, changes the type of deficiency of $v_2$ from $(+c-c'')$ to $(+c'-c'')$ or from $(+c'-c'')$ to $(+c-c'')$, and does not create any new deficiency.

If there is one more deficient vertex remaining, do the same procedure with it.
\end{proof}

\subsection{Markov chain Monte Carlo}
 In the Markov chain Monte Carlo framework, a Markov chain is designed that converges to the uniform distribution of the combinatorial objects. The standard way is to use the Metropolis-Hastings algorithm \cite{Metropolisetal1953,Hastings1970}, which tailors a primary Markov chain to another one converging to a prescribed equilibrium distribution. First we define the Markov chains, then the Metropolis-Hastings algorithm.
 
 \begin{dfn}
A \emph{discrete time, finite Markov chain} $X=(X_0,X_1,\ldots)$ is a sequence of random variables taking values on a finite state space $S$, such that $X_n$ depends on only $X_{n-1}$. That is, for all $m\ge 0$ and all possible sequence $i_0,i_1,\ldots,i_{m+1}\in S^{m+2}$,  $$P(X_{m+1}=i_{m+1}\ |\ X_0=i_0,\ldots X_m=i_m)=P(X_{m+1}=i_{m+1}\ |\ X_m=i_m).$$

For sake of simplicity, we will write $P(i_{m+1}|i_m)$ instead of $P(X_{m+1}=i_{m+1}\ |\ X_m=i_m)$.

An \emph{irreducible} Markov chain is a Markov chain with only one communicating class, i.e. for any two states $i,j\in S$, the probability of getting from $i$ to $j$ and from $j$ to $i$ (in multiple steps) are both positive.

A Markov chain is \emph{aperiodic} if all its states $i\in S$ satisfy that $$d_i:=\gcd \ \{t\ge 1|\ p_{ii}^{(t)}=P(\text{getting from  $i$ to itself via $t$ steps})>0\}=1,$$
where $\gcd$ stands for greatest common divisor.


\end{dfn}


A standard technique to make a Markov chain aperiodic is to have $P(i|i) \ne 0$ for all states $i$. Frequently, $P(i|i)$ is set at least $\frac{1}{2}$. Such a Markov chain is called \emph{Lazy Markov chain} \cite{Jerrum2003, Miklos2019}. The spectral analysis of Lazy Markov chains is technically simpler.

\begin{dfn}
For a Markov chain that converges to an equilibrium distribution $\pi$, it satisfies the \emph{detailed balance equation}, $\pi(i)P(j|i)=\pi(j)P(i|j)$.

\end{dfn}

The \emph{Markov chain Monte Carlo} (MCMC) method is a class of algorithms that achieves the goal of sampling from a probability distribution. One typical example is the Metropolis-Hastings algorithm \cite{Hastings1970, Metropolisetal1953}, described below.

\begin{dfn}
The \emph{Metropolis-Hastings} algorithm takes a Markov chain on a state space $X$ and transforms it into another one.

For a given function $g: X \rightarrow \mathbb{R}^{>0}$, and an irreducible, aperiodic Markov chain with transition probabilities such that for all pair of states $x,y\in X$ it holds that $P(y|x) \ne 0 \Rightarrow P(x|y)\ne 0$, a new Markov chain is constructed on the same state space. 
The state $x_i$ is defined based on $x_{i-1}$ in the following steps.
\begin{enumerate}
    \item Propose a candidate $y$ for the next sample $y\sim P(\cdot|x_{i-1})$, where $\sim$ stands for ``following distribution".
    
    \item Generate a random number $u\sim U(0,1)$, where $U(0,1)$ is the uniform distribution on the $[0,1]$ interval.
    
    \item The proposed state $y$ is \emph{accepted}, that is, $x_i=y$ if $u\le \frac{g(y)P(x_{i-1}|y)}{g(x_{i-1})P(y|x_{i-1})}$. Otherwise the proposed state is \emph{rejected}, that is, $x_i=x_{i-1}$.
\end{enumerate}
\end{dfn}

Observe that the probability of accepting a proposed state $y$ is
$$\min \left\{ 1,\frac{g(y)P(x|y)}{g(x)P(y|x)} \right\}.$$ The fraction $\frac{g(y)P(x|y)}{g(x)P(y|x)}$ is called the \emph{Metropolis-Hastings ratio}, and the probability is called \emph{acceptance probability}.

It is easy to prove that the Markov chain defined by the Metropolis-Hastings algorithm is irreducible, aperiodic, and reversible with respect to $\pi$, the probability distribution obtained after $g$ is normalized, and thus, it converges to distribution $\pi$ starting in an arbitrary state \cite{Jerrum2003,Miklos2019}. A variant of the Metropolis-Hastings algorithm is when for each pair of states $x$ and $y$, there are several ways to transform $x$ to $y$: $W = \{w_1, w_2, \ldots, w_t\}$, and there is a one-to-one mapping from $W$ to the set $W' = \{w'_1, w'_2, \ldots,w'_t\}$, the possible ways to transform $y$ back to $x$. Then the Metropolis-Hastings ratio can be changed to
$$
\frac{g(y)P(x,w'_i|y)}{g(x)P(y,w_i|x)}
$$
when $y$ is proposed from $x$ via the way $w_i$. Here $P(y,w_i|x)$ is the probability that $y$ is proposed from $x$ via way $w_i$. It can be proved that the so-obtained Markov chain still converges to $\pi$,  the probability distribution obtained after $g$ is normalized \cite{lunteretal2005}. In this paper, we will use this variant of the Metropolis-Hastings algorithm.

Such a Markov chain can be efficiently used for approximate sampling if 
the convergence time grows only polynomially with the size of the problem instance.

\section{A Markov chain Monte Carlo on edge $k$-colorings of a bipartite graph}\label{section3}

Below we define a Markov chain on the edge $k$-colorings of a bipartite graph $G = (V,E)$. We are going to prove that this Markov chain is irreducible, its diameter is at most $6|E|$, and when applied in a Metropolis-Hastings algorithm, the inverse of the acceptance ratio is upper bounded by a cubic function of $|V|$. 


\begin{dfn}\label{def:markov-chain}
Let $G$ be a bipartite graph, and let $k\ge \Delta$, where $\Delta$ is the maximum degree of $G$. We define a Markov chain $M(G,k)$ on the edge $k$-colorings of $G$. Let the current coloring be $C$. Then draw a random edge $k$-coloring in the following way.
\begin{enumerate}
    \item With probability $\frac{1}{2}$, do nothing (so the defined Markov chain is a Lazy Markov chain).
    \item With probability $\frac{1}{4}$, select two colors $c$ and $c'$ from the set of colors uniformly among the ${k\choose 2}$ possible unordered pairs. Consider the subgraph of $G$ with colors $c$ and $c'$; let it be denoted by $H$. Its non-trivial components are paths and cycles. Then select one of the following:
    \begin{enumerate}
        \item With probability $\frac{1}{3}$, do nothing.
        \item With probability $\frac{1}{3}$, uniformly select a sub-path or a cycle of $H$ from all subpaths and cycles in $H$.
        \item With probability $\frac{1}{3}$, uniformly select two sub-paths of $H$ from all possible pair of sub-paths such that both of them have exactly one end vertex which is also an end vertex in $H$. 
    \end{enumerate}
    Flip the colors in the selected subpaths. If there is any deficient vertex, select one of them uniformly, then select one of the edges with the same color incident to the selected vertex uniformly and change its color from $c$ to $c'$ or $c'$ to $c$. We denote this subcase by \emph{i)} for reference. Otherwise, with $\frac{1}{2}$ probability, select either a $c$-edge or a $c'$-edge uniformly from all edges with color $c$ and $c'$, and change its color to the opposite ($c'$ or $c$) (we denote this subcase by \emph{ii)} for reference) and with $\frac{1}{2}$ probability, do nothing. Eliminate all deficiency by flipping the colors on appropriate alternating path, selecting uniformly from the two neighbor edges with the same color incident to deficient vertices.
    \item With probability $\frac{1}{4}$ select three colors uniformly from the all possible ${k\choose 3}$ unordered pairs, and select uniformly one from the three colors. Let the distinguished color be denoted by $c''$, and let the other two colors be denoted by $c$ and $c'$. Consider the subgraph consisting of the colors $c$ and $c'$; let it be denoted by $H$. Its non-trivial components are paths and cycles. Then select one of the following:
    \begin{enumerate}
        \item With probability $\frac{1}{3}$, do nothing.
        \item With probability $\frac{1}{3}$, uniformly select a sub-path of $H$ from all subpaths such that at least one of its end vertices is not an end vertex in $H$.
        \item With probability $\frac{1}{3}$, uniformly select two sub-paths of $H$ from all possible pair of sub-paths such that both of them has exactly one end vertex which is also an end vertex in $H$.
    \end{enumerate}
    Flip the colors in the selected subpaths. If there are two deficient vertices, select one of them uniformly. If there is only one deficient vertex, then with probability $\frac{1}{2}$ select it, and with probability $\frac{1}{2}$, select uniformly a non-deficient vertex among those which are incident to both a $c''$ edge and at least one of a $c$ or a $c'$ edge. If there is no deficient vertex, then select uniformly a non-deficient vertex among those which are incident to both a $c''$ edge and at least one of a $c$ or a $c'$ edge.
    
    If a deficient vertex is selected, select uniformly one of the edges with the same color incident to the selected vertex, and let this color be denoted by $\tilde{c}$. Consider the maximal alternating path or cycle with colors $\tilde{c}$ and $c''$ that contains the selected edge. Flip the colors in this path or cycle. We denote this subcase by \emph{iii)} for reference.
    
    If a non-deficient vertex is selected, let its incident edge with color $c''$ be denoted by $e$ and the incident edge with color $c$ or $c'$ be denoted by $f$, and let this latter edge's color be denoted by $\tilde{c}$. If there are edges with color $c$ or $c'$, select one of them uniformly. Take the maximal alternating path with colors $c''$ and $\tilde{c}$ that contains $e$ but does not contain $f$. Flip the colors in this path. We denote this subcase by \emph{iv)} for reference.
    
    Eliminate all deficiency by flipping the colors on appropriate alternating path, selecting uniformly from the two neighbor edges with the same color incident to deficient vertices.
\end{enumerate}
\end{dfn}

First we prove that $M(G,k)$  is irreducible and has a small diameter. That is, the perturbations presented in it are sufficient to transform any edge $k$-coloring $C_1$ to another edge $k$-coloring $C_2$ of a bipartite graph $G = (V,E)$ in at most $6|E|$ steps.  Our strategy is the following:
\begin{enumerate}
    \item Fix an order of colors appearing in $C_2$: $c_1, c_2,\ldots, c_l$. We will have that $C_1$ is transformed via milestone realizations $C_1 = K_0, K_1, K_2,\ldots, K_{l-1} = C_2$ such that for each $i = 1, 2, \ldots l-2$ and each $j\le i$, $\restr{G}{K_i,c_j} = \restr{G}{C_2,c_j}$. Furthermore, when $K_i$ is transformed into $K_{i+1}$, no edge with color $c_1, c_2, \ldots, c_i$ changes color. These $K_i$ are called \emph{large milestones}.
    \item For each $i=1, 2,\ldots l-2$ we consider $H_i := \restr{G}{K_{i-1},c_i} \bigoplus \restr{G}{C_2,c_i}$ where $\bigoplus$ denotes the symmetric difference. The maximal degree in $H_i$ is $2$, therefore $H_i$ can be decomposed into isolated vertices, paths and cycles. Let the non-trivial components of $H_i$ be ordered and denoted by $N_1, N_2, \ldots, N_m$. $K_{i-1}$ is transformed into $K_{i}$ via milestones $K_{i-1} = L_0, L_1, \ldots L_m = K_i$ such that for all $j$, $L_j$ contains color $c_i$ only in $H_i$, and $(\restr{G}{L_j,c_i} \bigoplus \restr{G}{K_i,c_i}) \cap (N_1 \cup N_2 \cup \ldots \cup N_j)$ is the empty graph. That is, $L_j$ and $K_i$ agrees in color $c_i$ on the first $j$ non-trivial components of $H_i$. These $L_i$ are called \emph{small milestones}. The last transition from $K_{l-2}$ to $K_{l-1} = C_2$ is handled in a separate way.
    \item For each $j = 1, 2, \ldots,m$, the transformation from $L_{j-1}$ to $L_{j}$ goes in the following way. In the description of transformations, we use almost edge colorings (see Definition~\ref{def:edge-coloring}), in which at most two vertices are incident to two edges with the same color. First we give a transformation via such almost edge colorings $A_1, A_2, \cdots$ using transformations $f_1, f_2, \cdots$. Then we show how to transform these almost edge colorings to edge colorings $X_1, X_2, \cdots$ using transformations $\varphi_1, \varphi_2, \cdots$. The transformation between edge colorings $X_t$ and $X_{t+1}$ is the composition of transformations $\varphi_{t+1}\circ f_{t+1} \circ \varphi_t^{-1}$ (see also Figures~\ref{fig:between_small_milestones}~and~\ref{fig:small-milestone-example}).
    \item Transforming $K_{l-2}$ to $C_2$ is done in the following way. Consider $H = \restr{G}{K_{l-2},c_{l-1}} \bigoplus \restr{G}{C_2,c_{l-1}}$. The non-trivial components of $H$ are paths and cycles, each of which is colored alternately with colors $c_{l-2}$ and non-$c_{l-2}$ in $K_{l-2}$. For each component, we change the non-$c_{l-2}$ colors to $c_{l-1}$ and then flip the colors of the edges. This transforms $K_{l-2}$ to $C_2$.

\end{enumerate}

\begin{figure}
\begin{centering}
\begin{tikzcd}
\qquad \quad L_{j-1} \arrow{r}{f_1} \arrow[dr, bend right, dashed]
& A_1 \arrow{r}{f_2} \arrow[shift right, swap]{d}{\varphi_1}
& A_2  \arrow[shift right, swap]{d}{\varphi_2} \arrow{r}{f_3} &\cdots \cdots \arrow{r}{f_{e-2}} &A_{e-2} \arrow{r}{f_{e-1}} \arrow[shift right, swap]{d}{\varphi_{e-2}} &A_{e-1} \arrow{r}{f_e} \arrow[shift right, swap]{d}{\varphi_{e-1}} &L_{j}\\
& X_1\arrow[shift right, swap]{u}{\varphi_1^{-1}} \arrow[r, dashed] &X_2\arrow[shift right, swap]{u}{\varphi_2^{-1}}\arrow[r, dashed] &\cdots \cdots \arrow[r, dashed] &X_{e-2} \arrow[r, dashed] \arrow[shift right, swap]{u}{\varphi_{e-2}^{-1}} &X_{e-1} \arrow[ur, bend right, dashed] \arrow[shift right, swap]{u}{\varphi_{e-1}^{-1}}
\end{tikzcd}
\end{centering}

    \caption{The transformation between two small milestones. See text for details.}
    \label{fig:between_small_milestones}
\end{figure}
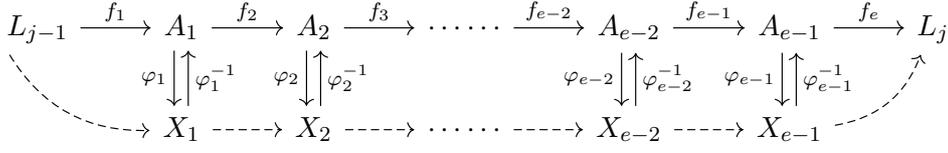

\begin{figure}
    \centering
      \begin{adjustbox}{max totalsize={\textwidth}{0.9\textheight},center}
\begin{tikzpicture}
\definecolor{dark green}{rgb}{0,0.7,0}
\definecolor{dark blue}{rgb}{0,0,0.7}

\node[] at (-1,1.5) {$L$};
\node[] at (1.5,1.5) {$N$};
\node[] at (2.3,1.5) {$e_1$};
\node[] at (1.5,0.7) {$e_2$};
\node[] at (0.7,1.5) {$e_3$};
\node[] at (1.5,2.3) {$e_4$};

\draw [black,fill=black] (0,0) circle (0.05cm);
\draw [black,fill=black] (0,3) circle (0.05cm);
\draw [black,fill=black] (3,0) circle (0.05cm);
\draw [black,fill=black] (3,3) circle (0.05cm);
\draw [black,fill=black] (1,1) circle (0.05cm);
\draw [black,fill=black] (1,2) circle (0.05cm);
\draw [black,fill=black] (2,1) circle (0.05cm);
\draw [black,fill=black] (2,2) circle (0.05cm);
\draw [ultra thick,red] (1,1) to  (2,1);
\draw [ultra thick,red] (1,2) to  (2,2);
\draw [ultra thick,red] (0,0) to  (3,0);
\draw [ultra thick,red] (0,3) to  (3,3);
\draw [dark green] (1,1) to  (1,2);
\draw [dark green] (0,0) to  (0,3);
\draw [dark green] (2,1) to  (3,0);
\draw [dark green] (2,2) to  (3,3);
\draw [dotted,dark blue] (2,1) to  (2,2);
\draw [dotted,dark blue] (3,0) to  (3,3);
\draw [dotted,dark blue] (0,0) to  (1,1);
\draw [dotted,dark blue] (1,2) to  (0,3);

\draw [->,>=triangle 60] (1.5,3.5) to  (1.5,4.5);
\node[] at (2,4) {$f_1$};

\draw [->,>=triangle 60] (3.5,1.5) to  (11.5,4.5);
\node[] at (8,2.7) {$\varphi_1\circ f_1$};

\node[] at (-1,6.5) {$A_1$};
\draw [black,fill=black] (0,5) circle (0.05cm);
\draw [black,fill=black] (0,8) circle (0.05cm);
\draw [black,fill=black] (3,5) circle (0.05cm);
\draw [black,fill=black] (3,8) circle (0.05cm);
\draw [black,fill=black] (1,6) circle (0.05cm);
\draw [black,fill=black] (1,7) circle (0.05cm);
\draw [black,fill=black] (2,6) circle (0.05cm);
\draw [black,fill=black] (2,7) circle (0.05cm);
\draw [ultra thick,red] (1,6) to  (2,6);
\draw [ultra thick,red] (1,7) to  (2,7);
\draw [ultra thick,red] (0,5) to  (3,5);
\draw [ultra thick,red] (0,8) to  (3,8);
\draw [dark green] (1,6) to  (1,7);
\draw [dark green] (0,5) to  (0,8);
\draw [dark green] (2,6) to  (3,5);
\draw [dark green] (2,7) to  (3,8);
\draw [ultra thick,red] (2,6) to  (2,7);
\draw [dotted,dark blue] (3,5) to  (3,8);
\draw [dotted,dark blue] (0,5) to  (1,6);
\draw [dotted,dark blue] (1,7) to  (0,8);

\draw [->,>=triangle 60] (3.5,6.5) to  (9.5,6.5);
\node[] at (7,6) {$\varphi_1$};

\node[] at (14,6.5) {$X_1$};
\draw [black,fill=black] (10,5) circle (0.05cm);
\draw [black,fill=black] (10,8) circle (0.05cm);
\draw [black,fill=black] (13,5) circle (0.05cm);
\draw [black,fill=black] (13,8) circle (0.05cm);
\draw [black,fill=black] (11,6) circle (0.05cm);
\draw [black,fill=black] (11,7) circle (0.05cm);
\draw [black,fill=black] (12,6) circle (0.05cm);
\draw [black,fill=black] (12,7) circle (0.05cm);
\draw [dotted,dark blue] (11,6) to  (12,6);
\draw [dotted,dark blue] (11,7) to  (12,7);
\draw [dotted,dark blue] (10,5) to  (13,5);
\draw [dotted,dark blue] (10,8) to  (13,8);
\draw [dark green] (11,6) to  (11,7);
\draw [dark green] (10,5) to  (10,8);
\draw [dark green] (12,6) to  (13,5);
\draw [dark green] (12,7) to  (13,8);
\draw [ultra thick, red] (12,6) to  (12,7);
\draw [ultra thick, red] (13,5) to  (13,8);
\draw [ultra thick, red] (10,5) to  (11,6);
\draw [ultra thick, red] (11,7) to  (10,8);

\node[] at (-1,11.5) {$A_2$};
\draw [black,fill=black] (0,10) circle (0.05cm);
\draw [black,fill=black] (0,13) circle (0.05cm);
\draw [black,fill=black] (3,10) circle (0.05cm);
\draw [black,fill=black] (3,13) circle (0.05cm);
\draw [black,fill=black] (1,11) circle (0.05cm);
\draw [black,fill=black] (1,12) circle (0.05cm);
\draw [black,fill=black] (2,11) circle (0.05cm);
\draw [black,fill=black] (2,12) circle (0.05cm);
\draw [dotted, dark blue] (1,11) to  (2,11);
\draw [ultra thick,red] (1,12) to  (2,12);
\draw [ultra thick,red] (0,10) to  (3,10);
\draw [ultra thick,red] (0,13) to  (3,13);
\draw [dark green] (1,11) to  (1,12);
\draw [dark green] (0,10) to  (0,13);
\draw [dark green] (2,11) to  (3,10);
\draw [dark green] (2,12) to  (3,13);
\draw [ultra thick, red] (2,11) to  (2,12);
\draw [dotted,dark blue] (3,10) to  (3,13);
\draw [dotted,dark blue] (0,10) to  (1,11);
\draw [dotted,dark blue] (1,12) to  (0,13);

\node[] at (14,11.5) {$X_2$};
\draw [black,fill=black] (10,10) circle (0.05cm);
\draw [black,fill=black] (10,13) circle (0.05cm);
\draw [black,fill=black] (13,10) circle (0.05cm);
\draw [black,fill=black] (13,13) circle (0.05cm);
\draw [black,fill=black] (11,11) circle (0.05cm);
\draw [black,fill=black] (11,12) circle (0.05cm);
\draw [black,fill=black] (12,11) circle (0.05cm);
\draw [black,fill=black] (12,12) circle (0.05cm);
\draw [dotted,dark blue] (11,11) to  (12,11);
\draw [dotted,dark blue] (11,12) to  (12,12);
\draw [dotted,dark blue] (10,10) to  (13,10);
\draw [dotted,dark blue] (10,13) to  (13,13);
\draw [dark green] (11,11) to  (11,12);
\draw [dark green] (10,10) to  (10,13);
\draw [dark green] (12,11) to  (13,10);
\draw [dark green] (12,12) to  (13,13);
\draw [ultra thick, red] (12,11) to  (12,12);
\draw [ultra thick, red] (13,10) to  (13,13);
\draw [ultra thick, red] (10,10) to  (11,11);
\draw [ultra thick, red] (11,12) to  (10,13);

\draw [->,>=triangle 60] (1.5,8.5) to  (1.5,9.5);
\node[] at (2,9) {$f_2$};

\draw [->,>=triangle 60] (3.5,11.5) to  (9.5,11.5);
\node[] at (7,11) {$\varphi_2$};

\draw [->,>=triangle 60] (11.5,8.5) to  (11.5,9.5);
\node[] at (13,9) {$\varphi_2 \circ f_2 \circ \varphi_1^{-1}$};

\node[] at (-1,16.5) {$A_3$};
\draw [black,fill=black] (0,15) circle (0.05cm);
\draw [black,fill=black] (0,18) circle (0.05cm);
\draw [black,fill=black] (3,15) circle (0.05cm);
\draw [black,fill=black] (3,18) circle (0.05cm);
\draw [black,fill=black] (1,16) circle (0.05cm);
\draw [black,fill=black] (1,17) circle (0.05cm);
\draw [black,fill=black] (2,16) circle (0.05cm);
\draw [black,fill=black] (2,17) circle (0.05cm);
\draw [dotted, dark blue] (1,16) to  (2,16);
\draw [ultra thick,red] (1,17) to  (2,17);
\draw [ultra thick,red] (0,15) to  (3,15);
\draw [ultra thick,red] (0,18) to  (3,18);
\draw [dotted,dark blue] (1,16) to  (1,17);
\draw [dotted,dark blue] (0,15) to  (0,18);
\draw [dark green] (2,16) to  (3,15);
\draw [dark green] (2,17) to  (3,18);
\draw [ultra thick, red] (2,16) to  (2,17);
\draw [dotted,dark blue] (3,15) to  (3,18);
\draw [dark green] (0,15) to  (1,16);
\draw [dark green] (1,17) to  (0,18);

\node[] at (14,16.5) {$X_3$};
\draw [black,fill=black] (10,15) circle (0.05cm);
\draw [black,fill=black] (10,18) circle (0.05cm);
\draw [black,fill=black] (13,15) circle (0.05cm);
\draw [black,fill=black] (13,18) circle (0.05cm);
\draw [black,fill=black] (11,16) circle (0.05cm);
\draw [black,fill=black] (11,17) circle (0.05cm);
\draw [black,fill=black] (12,16) circle (0.05cm);
\draw [black,fill=black] (12,17) circle (0.05cm);
\draw [ultra thick,red] (11,16) to  (12,16);
\draw [ultra thick,red] (11,17) to  (12,17);
\draw [ultra thick,red] (10,15) to  (13,15);
\draw [ultra thick,red] (10,18) to  (13,18);
\draw [dotted,dark blue] (11,16) to  (11,17);
\draw [dotted,dark blue] (10,15) to  (10,18);
\draw [dark green] (12,16) to  (13,15);
\draw [dark green] (12,17) to  (13,18);
\draw [dotted,dark blue] (12,16) to  (12,17);
\draw [dotted,dark blue] (13,15) to  (13,18);
\draw [dark green] (10,15) to  (11,16);
\draw [dark green] (11,17) to  (10,18);

\draw [->,>=triangle 60] (1.5,13.5) to  (1.5,14.5);
\node[] at (2,14) {$f_3$};

\draw [->,>=triangle 60] (3.5,16.5) to  (9.5,16.5);
\node[] at (7,16) {$\varphi_3$};

\draw [->,>=triangle 60] (11.5,13.5) to  (11.5,14.5);
\node[] at (13,14) {$\varphi_3 \circ f_3 \circ \varphi_2^{-1}$};

\node[] at (-1,21.5) {$A_4$};

\draw [black,fill=black] (0,20) circle (0.05cm);
\draw [black,fill=black] (0,23) circle (0.05cm);
\draw [black,fill=black] (3,20) circle (0.05cm);
\draw [black,fill=black] (3,23) circle (0.05cm);
\draw [black,fill=black] (1,21) circle (0.05cm);
\draw [black,fill=black] (1,22) circle (0.05cm);
\draw [black,fill=black] (2,21) circle (0.05cm);
\draw [black,fill=black] (2,22) circle (0.05cm);
\draw [dotted, dark blue] (1,21) to  (2,21);
\draw [ultra thick,red] (1,22) to  (2,22);
\draw [ultra thick,red] (0,20) to  (3,20);
\draw [ultra thick,red] (0,23) to  (3,23);
\draw [ultra thick,red] (1,21) to  (1,22);
\draw [dotted,dark blue] (0,20) to  (0,23);
\draw [dark green] (2,21) to  (3,20);
\draw [dark green] (2,22) to  (3,23);
\draw [ultra thick, red] (2,21) to  (2,22);
\draw [dotted,dark blue] (3,20) to  (3,23);
\draw [dark green] (0,20) to  (1,21);
\draw [dark green] (1,22) to  (0,23);

\node[] at (14,21.5) {$X_4$};
\draw [black,fill=black] (10,20) circle (0.05cm);
\draw [black,fill=black] (10,23) circle (0.05cm);
\draw [black,fill=black] (13,20) circle (0.05cm);
\draw [black,fill=black] (13,23) circle (0.05cm);
\draw [black,fill=black] (11,21) circle (0.05cm);
\draw [black,fill=black] (11,22) circle (0.05cm);
\draw [black,fill=black] (12,21) circle (0.05cm);
\draw [black,fill=black] (12,22) circle (0.05cm);
\draw [dotted,dark blue] (11,21) to  (12,21);
\draw [dotted,dark blue] (11,22) to  (12,22);
\draw [ultra thick,red] (10,20) to  (13,20);
\draw [ultra thick,red] (10,23) to  (13,23);
\draw [ultra thick,red] (11,21) to  (11,22);
\draw [dotted,dark blue] (10,20) to  (10,23);
\draw [dark green] (12,21) to  (13,20);
\draw [dark green] (12,22) to  (13,23);
\draw [ultra thick,red] (12,21) to  (12,22);
\draw [dotted,dark blue] (13,20) to  (13,23);
\draw [dark green] (10,20) to  (11,21);
\draw [dark green] (11,22) to  (10,23);

\draw [->,>=triangle 60] (1.5,18.5) to  (1.5,19.5);
\node[] at (2,19) {$f_4$};

\draw [->,>=triangle 60] (3.5,21.5) to  (9.5,21.5);
\node[] at (7,21) {$\varphi_4$};

\draw [->,>=triangle 60] (11.5,18.5) to  (11.5,19.5);
\node[] at (13,19) {$\varphi_4 \circ f_4 \circ \varphi_3^{-1}$};

\draw [->,>=triangle 60] (1.5,23.5) to  (1.5,24.5);
\node[] at (2,24) {$f_5$};

\draw [->,>=triangle 60] (11.5,23.5) to  (3.5,26.5);
\node[] at (7.5,26) {$f_5 \circ \varphi_4^{-1}$};

\node[] at (-1,26.5) {$L'$};
\draw [black,fill=black] (0,25) circle (0.05cm);
\draw [black,fill=black] (0,28) circle (0.05cm);
\draw [black,fill=black] (3,25) circle (0.05cm);
\draw [black,fill=black] (3,28) circle (0.05cm);
\draw [black,fill=black] (1,26) circle (0.05cm);
\draw [black,fill=black] (1,27) circle (0.05cm);
\draw [black,fill=black] (2,26) circle (0.05cm);
\draw [black,fill=black] (2,27) circle (0.05cm);
\draw [dotted, dark blue] (1,26) to  (2,26);
\draw [dotted,dark blue] (1,27) to  (2,27);
\draw [ultra thick,red] (0,25) to  (3,25);
\draw [ultra thick,red] (0,28) to  (3,28);
\draw [ultra thick, red] (1,26) to  (1,27);
\draw [dotted,dark blue] (0,25) to  (0,28);
\draw [dark green] (2,26) to  (3,25);
\draw [dark green] (2,27) to  (3,28);
\draw [ultra thick, red] (2,26) to  (2,27);
\draw [dotted,dark blue] (3,25) to  (3,28);
\draw [dark green] (0,25) to  (1,26);
\draw [dark green] (1,27) to  (0,28);



\end{tikzpicture}%
\end{adjustbox}
   \caption{Transformation of the edge coloring $L$ to the edge coloring $L'$. See text for details.}
    \label{fig:small-milestone-example}
\end{figure}
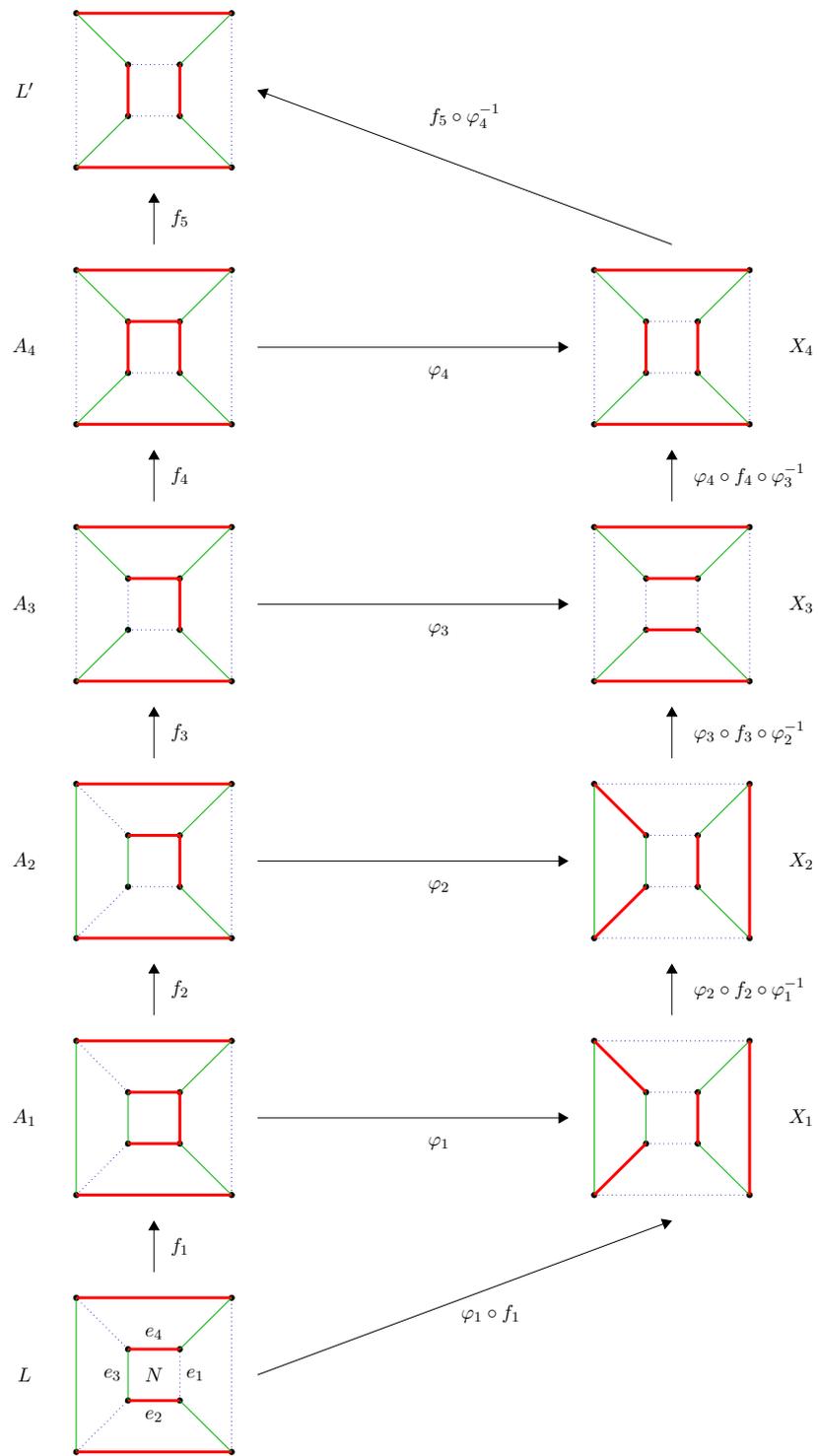

The precise description of the transformations from $L_{j-1}$ to $L_j$ is given by the following lemma and by Lemma~\ref{lem:transf-from-almost-to-proper}.
\begin{lem}\label{lem:path-between-small-milestones}
Let $L$ be an edge coloring of a bipartite graph $G$ with $k \ge 3$ colors, and let $N$ be a connected subgraph of $G$ with maximal degree $2$ and $s$ edges.
Assume that there exists a color $c$ such that the colors of the edges in $N$ are alternately $c$ and non-$c$. We further assume that for a path $N$, if any of its end edges has a non-$c$ color then $N$ cannot be extended with a $c$ edge, and if any of the end edges has color $c$ then that vertex has degree less than $k$. 
Then $L$ can be transformed into an edge coloring $L'$ in at most $\left\lceil\frac{3}{2}s\right\rceil$ steps such that $\restr{G}{L,c} \bigoplus \restr{G}{L',c} = N$ with the condition below: there exists a color $c'$ satisfying that at each step, one of the following transformation is performed:
\begin{enumerate}
    \item Changing the color of an edge from $c$ to some $c'$.
    \item Changing the color of an edge from some $c'$ to $c$.
    \item In a maximal path or a cycle containing edges alternately colored by $c'$ and $c''$  for some colors $c',c''\ne c$, flipping the color of each edge from $c'$ to $c''$ and vice versa.
\end{enumerate}
Furthermore, at each step, the so-obtained perturbation gives an almost edge coloring.
\end{lem}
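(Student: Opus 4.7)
The plan is to give an explicit algorithm transforming $L$ into $L'$ using at most $\lceil 3s/2 \rceil$ operations of the three allowed types. I would fix a non-$c$ reference color $c'$ and traverse $N$ along a chosen direction, processing the non-$c$ edges one by one. For each non-$c$ edge $e$ of $N$ with current color $c_i$, at most three operations are used: (a)~if $c_i \ne c'$, perform a type~3 flip on the maximal $c'/c_i$-alternating path containing $e$, which changes $e$'s color to $c'$; (b)~perform a type~2 operation changing $e$ from $c'$ to $c$; (c)~if the forward-adjacent $c$-edge of $N$ exists, perform a type~1 operation changing it from $c$ to $c'$. In the cycle case with $m$ non-$c$ edges ($s = 2m$), the total count is at most $3m = \lceil 3s/2 \rceil$. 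For each path case, at most one extra type~1 operation may be required at a boundary $c$-edge that is not covered by any step~(c), and the bound $\lceil 3s/2 \rceil$ is still respected.

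The main verification is that the coloring remains an almost edge coloring throughout. After the first type~2 operation in step~(b), the two endpoints $u$, $v$ of $e$ become $(+c-c')$-deficient. The subsequent type~1 operation in step~(c) reassigns the $c$-edge at $u$ (in the forward direction of $N$) to color $c'$; this resolves $u$'s deficiency but may introduce a new $(+c'-c)$-deficiency at the far endpoint if that vertex already carries a $c'$-edge. The deficiency at $v$ is ``stuck'' and persists until the traversal reaches it from the other side, while the freshly created deficiency propagates forward as subsequent rounds are processed. At every intermediate moment the number of deficient vertices is at most two, so every intermediate state is an almost edge coloring. When the last round completes, both deficiencies cancel at the meeting point, and we obtain the proper edge coloring $L'$ with $\restr{G}{L,c} \oplus \restr{G}{L',c} = N$.

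The lemma's boundary hypotheses for paths enable clean termination: if an end-edge of $N$ is $c$, the end vertex has degree $<k$, providing a free color for a type~1 reassignment that introduces no outward deficiency; if an end-edge is non-$c$, the end vertex has no $c$-edge in $G$, so the type~2 operation there creates no local deficiency. The main obstacle I foresee lies in step~(a): a single $c'/c_i$-alternating path may contain several non-$c$ edges of $N$ simultaneously, and the flip may accidentally recolor some of them from $c'$ back to $c_i$, forcing additional flips later. I would address this either by choosing $c'$ to be a non-$c$ color that does not appear on any edge of $N$ (when such a choice exists) or by grouping same-color $N$-edges into joint flips; a careful accounting shows that the total number of step-(a) flips remains bounded by the number of non-$c$ edges of $N$, preserving the overall bound $\lceil 3s/2 \rceil$.
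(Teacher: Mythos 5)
Your overall strategy is the same as the paper's: fix a color $c'$, walk along $N$, recolor edges between $c$ and $c'$ one at a time (using $c'/c''$ flips to prepare a non-$c$ edge whose color is not $c'$), and argue that at most two deficient vertices exist at any moment. However, the places where the paper's proof does most of its work are exactly the places your argument glosses over, and two of them are genuine gaps. First, in the cycle case your claim that ``both deficiencies cancel at the meeting point'' is not justified: an intermediate maximal $c'/c''$ flip can terminate at the stuck vertex $v_1$ precisely because $v_1$ has no $c'$ edge, and after that flip $v_1$ is missing $c''$ rather than $c'$. Recoloring the last $c$-edge at $v_1$ to $c'$ then gives $v_1$ two $c'$ edges, so the deficiency survives; the paper's proof needs one additional flip along a $c'/c''$-alternating path from $v_1$ to finish. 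Similarly, at a path end whose last edge has color $c$, your proposed fix (``a free color for a type-1 reassignment'') is not one of the three allowed operations unless the free color happens to be the fixed $c'$; the paper instead recolors to $c'$, accepts a possible new deficiency at $v_s$, and removes it with a $c'/c''$ flip where $c''$ is a color absent at $v_s$ (degree $<k$), using bipartiteness to show this alternating path cannot return to $v_s$. You give neither of these arguments.

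Second, the ``main obstacle'' you identify is real only because your fixes for it do not work, while the correct resolution is different. A non-$c$ color $c'$ absent from $N$ need not exist (already for $k=3$ both non-$c$ colors may occur on $N$), and ``grouping same-color $N$-edges into joint flips'' with an unspecified ``careful accounting'' is not a proof. The point you are missing is that interference of a flip with later non-$c$ edges of $N$ is harmless: type-3 flips never touch color $c$, so they cannot disturb the target condition $\restr{G}{L,c} \bigoplus \restr{G}{L',c} = N$, and when the walk reaches a non-$c$ edge you simply flip with whatever its current color is, so each non-$c$ edge still costs at most one flip and the $\left\lceil\frac{3}{2}s\right\rceil$ bound stands. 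What does need an explicit restriction is the flip performed at the current deficient vertex: that vertex has two $c'$ edges, so ``the maximal $c'/c''$-alternating path containing $e$'' is ambiguous and may run back through the previously recolored edge of $N$; the paper's proof requires the flip path not to contain the previous edge of $N$, and without that requirement your deficiency bookkeeping (at most two deficient vertices, with the propagating one sitting at the head of the walk) is not established.
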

Before we prove Lemma~\ref{lem:path-between-small-milestones}, we give an example illustrated on the left hand side in Figure~\ref{fig:small-milestone-example}. In this example, an edge coloring $L$ is transformed into an edge coloring $L'$ via almost edge colorings in five steps. The component $N$ is a $4$-cycle of alternating red and non-red edges, labeled by $e_1$, $e_2$, $e_3$, and $e_4$. First the color of $e_1$ is changed from blue to red. Then the color of $e_2$ is changed from red to blue. Then colors of the edges in the alternating cycle of blue and green edges containing $e_3$ are flipped. Then the color of $e_3$ is changed from blue to red. Finally the color of $e_4$ is changed from red to blue.

\begin{proof}[Proof of Lemma~\ref{lem:path-between-small-milestones}]
$N$ is either a path or a cycle. In both cases, the colors of its edges alternate between $c$ and non-$c$ in $L$. Order the vertices of $N$ by traveling around a cycle or from one to the another end for a path. Start the walk along the cycle by a non-$c$ edge or from the end with a non-$c$ edge if such end exists in the path.  In both cases, let color of the non-$c$ edge be the $c'$ stated in the lemma. Otherwise start with a $c$ edge. In this case, let $c'$ be a color that is not among the colors of those edges incident to the end vertex. Let the vertices along the walk be denoted by $v_1, v_2,\ldots, v_s$. 

These transformations are applied in the following way: If $(v_1,v_2)$ has color $c'$, then the first step is to change it to $c$. In all other cases, the first step is to change the color of $(v_1,v_2)$ from $c$ to $c'$. Then we consider the edges along the walk one by one, and change their colors in one step if the color of the current edge is $c$ and in at most two steps otherwise. If the color of the current edge is $c$, the next step is to change it to $c'$. If the color of the current edge is $c'$, then the next step is to change it to $c$. If the color of the current edge $e$ is some $c''\ne c,c'$, then first we consider the maximal alternating path or alternating cycle with colors $c'$ and $c''$ containing $e$ and not containing the previous edge in $N$ and flip the colors of the edges of this path or cycle. After this step, $e$ has color $c'$, so the next step is to change this color to $c$. 

We claim that along these transformations, we have almost edge $k$-colorings for some colors $c$ and $c'$.

Indeed, if $N$ is a cycle, then the first transformation creates two $(+c-c')$-deficient vertices. If $N$ is a path, and its first edge has color $c'$, then the first transformation creates a $(+c-c')$-deficient vertex. Indeed, observe that $v_1$ is not incident to a $c$ vertex by the condition in the lemma. If $N_j$ is a path, and its first edge has color $c$, then the first transformation creates at most one $(+c'-c)$-deficient vertex (in case when $v_2$ has an incident $c'$ edge before the transformation).

If the current edge $e=(v_l, v_{l+1})$ has color $c$, then vertex $v_l$ has a $(+c-c')$ deficiency. By changing the color of $e$ from $c$ to $c'$, we eliminate this deficiency. However, we might create a new, $(+c'-c)$-deficiency for vertex $v_{l+1}$ if it has an incident $c'$ edge before the transformation. If the edge $(v_{l+1},v_{l+2})$ does not have color $c'$, then its color is some $c''$. Find the longest alternating path or cycle with colors $c'$ and $c''$ containing $(v_{l+1}, v_{l+2})$ but not containing $(v_{l},v_{l+1})$ and flip the colors in it. After this step, $v_{l+1}$ has a $(+c'-c)$-deficiency. Now comes changing the color of the edge $(v_{l+1},v_{l+2})$ from $c'$ to $c$, that eliminates the deficiency of $v_{l+1}$ and creates a $(+c-c')$ deficiency for $v_{l+2}$.

This eliminating a deficiency and creating a new one continues until we get back to $v_1$ in the cycle or to the other end of the path. 

If $N$ is a cycle, in the last step or possibly in the last but one step we change the color of $(v_s,v_1)$ from $c$ to $c'$. This eliminates the $(+c-c')$-deficiency of both vertex $v_s$ and vertex $v_1$ if $v_1$ still has deficiency $(+c-c')$. If $v_1$ changed the type of deficiency from $(+c-c')$ to some $(+c-c'')$ because in one of the steps, a maximal alternating path with colors $c'$ and $c''$ ended in $v_1$, then flipping the color of $(v_s,v_1)$ from $c$ to $c'$ changes the type of deficiency of $v_1$ from $(+c-c'')$ to $(+c'-c'')$. Then consider a longest alternating path with colors $c'$ and $c''$ starting in $v_1$, and flip the colors in it. This eliminates the deficiency of $v_1$.

If $N$ is a path, then the last step is to change the color of $(v_{s-1},v_s)$ from $c'$ to $c$ or possibly the last but one step is to change the color of $(v_{s-1},v_s)$ from $c$ to $c'$. This eliminates the deficiency of $v_{s-1}$ and possibly creates a new $(+c'-c)$-deficiency for $v_s$, if it has an incident $c'$ edge before the step in question. In that case, there exists a $c''$ color, such that $v_s$ is not incident to any edge with $c''$ color. Consider the longest alternating path with colors  $c'$ and $c''$ containing edge $(v_{s-1},v_s)$. Such an alternating path exists since $G$ is a bipartite graph. Indeed, an alternating path could return to $v_s$ after even number of steps, which would mean an edge with color $c''$. However, $v_s$ does not incident to an edge with color $c''$. Flip the colors in this path; this eliminates the deficiency of $v_s$.

Every second edge can be transformed in one step, and every other edge can be transformed in at most two steps, furthermore, the first edge can be transformed in one step, and the last in at most two steps, therefore the number of steps is indeed at most $\left\lceil\frac{3}{2}s\right\rceil$ steps.
\end{proof}


$N_j$ is a non-trivial component of $H_i$, and thus either it is a path or a cycle. In both cases, the color of its edges alternate between $c_j$ and non-$c_j$ in $L_{j-1}$. If it is a path and any of the end edges has a non-$c_j$ color, then it cannot be extended by a $c_j$ edge. If any of the end edges has color $c_j$, then its end vertex is not incident to a $c_j$ edge in $C_2$, thus its degree is smaller than $k$. Furthermore, the number of colors is at least $3$, therefore, Lemma~\ref{lem:path-between-small-milestones} can be applied to transform $L_{j-1}$ to $L_j$ by setting $N$ to $N_j$ and the color $c$ to $c_j$.



We are ready to state and prove the theorem on irreducibility and small diameter.

\begin{thm}
Let $C_1$ and $C_2$ be two edge $k$-colorings of the same bipartite graph $G = (V,E)$. Then $C_1$ can be transformed into $C_2$ in at most $6|E|$ steps in the Markov chain $M(G,k)$.
\end{thm}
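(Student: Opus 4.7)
The plan is to execute the milestone strategy laid out immediately before the theorem. Fix any ordering $c_1,\ldots,c_l$ of the colours appearing in $C_2$ and inductively build large milestones $C_1 = K_0, K_1, \ldots, K_{l-1} = C_2$ so that $\restr{G}{K_i,c_j} = \restr{G}{C_2,c_j}$ for all $j \le i$. For each $i$, the symmetric difference $H_i := \restr{G}{K_{i-1},c_i} \oplus \restr{G}{C_2,c_i}$ has maximum degree two, so its non-trivial components $N^{(i)}_1,\ldots,N^{(i)}_{m_i}$ are paths and cycles alternating between $c_i$ and non-$c_i$ in $K_{i-1}$. Insert small milestones $K_{i-1} = L^{(i)}_0,\ldots,L^{(i)}_{m_i} = K_i$ so that $L^{(i)}_j$ already agrees with $K_i$ on the $c_i$-edges inside $N^{(i)}_1 \cup \cdots \cup N^{(i)}_j$, and handle the last large-milestone transition $K_{l-2} \to C_2$ by the two-colour shortcut sketched in the outline.

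For each transition $L^{(i)}_{j-1} \to L^{(i)}_j$, I invoke Lemma~\ref{lem:path-between-small-milestones} with $N = N^{(i)}_j$ and $c = c_i$; its hypotheses are met because the endpoints of each component are either unsaturated in $c_i$ or incident only to a non-$c_i$ edge that cannot be extended. Each of the at most $\lceil 3 s^{(i)}_j / 2 \rceil$ moves there produced, together with its Lemma~\ref{lem:transf-from-almost-to-proper}-style clean-up, is realised as a single transition of $M(G,k)$: a direct $c \leftrightarrow c'$ edge-colour switch plus its deficiency resolution corresponds to Case~2 of Definition~\ref{def:markov-chain} with pair $\{c,c'\}$, subcase (a) at the path-selection stage and subcase ii) at the colour-change stage, while a maximal $(c',c'')$ alternating path or cycle flip plus its clean-up corresponds to Case~2 with pair $\{c',c''\}$ and subcase (b) (or, when needed for the final transition, to Case~3 with $c_i$ as the distinguished colour). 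Composing these transitions takes $C_1$ to $C_2$.

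It then remains to tally the total length. Using $\lceil 3s/2 \rceil \le 2s$ for $s \ge 1$, the total number of Markov chain steps is at most
\[
\sum_{i=1}^{l-1} \sum_{j=1}^{m_i} \Bigl\lceil \tfrac{3}{2} s^{(i)}_j \Bigr\rceil \;\le\; 2 \sum_{i=1}^{l-1} |H_i|.
\]
Splitting $|H_i| \le |\restr{G}{K_{i-1},c_i}| + |\restr{G}{C_2,c_i}|$ and noting $\sum_i |\restr{G}{C_2,c_i}| = |E|$, I am left with showing $\sum_i |\restr{G}{K_{i-1},c_i}| \le 2|E|$. My plan is to charge each edge $e$ to at most two indices at which its current colour matches the colour being processed: one for its initial $C_1$-colour and at most one for an intermediate colour acquired through an auxiliary $(c',c'')$-flip inside Lemma~\ref{lem:path-between-small-milestones}. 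The invariance that every $c_j$-edge is frozen from $K_j$ onward caps all later contributions, yielding the $2|E|$ bound and therefore the diameter bound $6|E|$.

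The main obstacle I anticipate is precisely this bookkeeping. The auxiliary $(c',c'')$-flips inside Lemma~\ref{lem:path-between-small-milestones} may temporarily repaint edges whose colour has not yet been processed, and it must be verified that such repaintings do not chain into more than one extra match per edge across the whole execution. If the naive charging falls short, the fix is to refine the proof of Lemma~\ref{lem:path-between-small-milestones} so that the auxiliary swap colour $c''$ is picked from among colours that will not match any future processed $c_i$ on the affected edges, which forces the bookkeeping to stay within the advertised $6|E|$ budget; the special last transition $K_{l-2} \to C_2$ is absorbed into the same estimate because its two active colours make every step of its processing of linear cost in the remaining mismatched edges.
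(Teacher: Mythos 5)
The decisive problem is your counting step. The paper's bound comes from a purely structural observation anchored to the fixed target coloring $C_2$: each $H_i$ is the symmetric difference of two matchings, so every non-trivial component alternates between edges colored $c_i$ in $K_{i-1}$ and edges colored $c_i$ in $C_2$; hence at least a third of the edges of each component are $c_i$-edges of $C_2$, and since every edge of $G$ is a $c_i$-edge of $C_2$ for exactly one $i$, the total number of edges over all components of all $H_i$ is at most $3|E|$, which together with $\lceil\frac{3}{2}s\rceil\le 2s$ gives $6|E|$. You replace this by the temporal claim $\sum_i|\restr{G}{K_{i-1},c_i}|\le 2|E|$, justified by charging each edge to at most two stages, namely its initial $C_1$-colour and ``at most one intermediate colour acquired through an auxiliary flip''. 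That is precisely the unproven point: an edge lying outside the component currently being processed can be repainted by the auxiliary $(c',c'')$-flips of Lemma~\ref{lem:path-between-small-milestones} at several different stages, each time receiving a colour that is still to be processed, so it may match the processed colour at the start of many later stages; nothing in the lemma limits this to one extra match per edge. Your proposed repair --- choosing $c''$ among colours that will not create future matches --- is not implementable: in Lemma~\ref{lem:path-between-small-milestones} the colour $c''$ is not free, it is the pre-existing colour of the next edge of $N$, the companion colour $c'$ is essentially forced as well, and the flip unavoidably recolours edges outside $N$ between two not-yet-processed colours. So the quantitative core of the theorem is missing; the fix is the $C_2$-anchored component count above, which needs no bookkeeping of recolourings because the quantity being counted (edges coloured $c_i$ in $C_2$) never changes during the process.

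A secondary imprecision is the correspondence with the moves of $M(G,k)$. A chain step between consecutive proper colourings is the composition $\varphi_{t+1}\circ f_{t+1}\circ\varphi_t^{-1}$: before the next $f$-move one must first undo the previous clean-up, and this is exactly what the sub-path selections in cases (2)(b),(c) and (3)(b),(c) of Definition~\ref{def:markov-chain} provide. Your description assigns subcase (a) (``do nothing'') to the selection stage for a plain colour switch, which is only correct for the first move on a component, and it relegates case (3) to the final transition $K_{l-2}\to C_2$, whereas the flips involving a third colour $c''$ occur throughout the processing of a component and are case (3) moves (subcases iii) and iv)). This part is repairable, but as written the simulation of Lemma~\ref{lem:path-between-small-milestones} by single transitions of $M(G,k)$ is not accurate either.
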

\begin{proof}
First we show that these transformations are sufficient, then we prove the upper bound on the number of necessary transformations.

Observe that from $K_0$ to $K_{l-2}$, any large milestone is also a small milestone. Therefore it is enough to show that the listed perturbations are sufficient to transform one small milestone to the next small milestone and they are sufficient to transform $K_{l-2}$ to $K_{l-1}$. Observe that the listed perturbations contains the case when the colors are flipped along a maximal alternating path or cycle. Indeed, in case (2)(b) in Definition~\ref{def:markov-chain}, a maximal sub-path or a cycle can also be selected. These transformations are sufficient to transform $K_{l-2}$ to $K_{l-1}$.

Any step between two small milestones is a transformation in the form $\varphi_{t+1}\circ f_{t+1}\circ \varphi_t^{-1}$ or $\varphi_1\circ f_1$ or $f_e\circ \varphi_{e-1}^{-1}$, where  $\varphi$ is a transformation given in Lemma~\ref{lem:transf-from-almost-to-proper} and $f$ is a transformation given in Lemma~\ref{lem:path-between-small-milestones} (see also Figures~\ref{fig:between_small_milestones}~and~\ref{fig:small-milestone-example}.). A transformation $\varphi$ corrects the deficiency of at most two vertices by flipping the edges along at most two maximal alternating paths. Therefore, its inverse creates at most two deficiencies along at most two alternating paths, and these deficiencies appear at the end of the path that could be extended. With some probability, no deficiency is created, thus providing the  transformations of type $\varphi_1\circ f_1$.

A transformation $f$ achieves the following:
\begin{enumerate}[label=\roman*)]
    \item  changes the color of an edge which is adjacent to an edge with the same color. This happens when the color of an edge is changed from $c$ to $c'$ or from $c'$ to $c$, and the previous edge in component $N$ described in the proof of Lemma~\ref{lem:path-between-small-milestones} has also color $c$ or $c'$, or
    \item  changes the color of an arbitrary edge if there is no deficient vertex. This happens in the first step of transforming component $N$ as described in the proof of Lemma~\ref{lem:path-between-small-milestones}, or
    \item flips the colors along a maximal alternating path or cycle in which one of the vertices is deficient with one of the colors in the alternating path or cycle. This happens when the current edge $e$ (as described in the proof of Lemma~\ref{lem:path-between-small-milestones}) has color $c''$ and is adjacent to two edges with colors $c'$, or
    \item flips the colors along an alternating path thus creating exactly one deficient vertex. This happens when the current edge $e$ (as described in the proof of Lemma~\ref{lem:path-between-small-milestones}) has color $c''$, and the only adjacent edge with color $c'$ is the previous edge in component $N$.
\end{enumerate}
These are exactly the four subcases given in Definition~\ref{def:markov-chain}.

Finally, the transformation $\varphi_{t+1}$ eliminates all the deficiencies, if such deficiencies exist.

In each subgraph $H_i$, in each non-trivial component at least one third of the edges have color $c_i$ in $C_2$. Therefore, the total number of edges of the non-trivial components in all $H_i$'s is at most $3|E|$. Since for each component the number of necessary transformations is at most $\left\lceil\frac{3}{2}s\right\rceil \le 2s$, where $s$ is the number of edges in the given component, $6|E|$ number of steps are sufficient to transform any edge coloring to any other one.
\end{proof}

As an example, we show how to generate the transformations in Figure~\ref{fig:small-milestone-example} by the transformations in Definition~\ref{def:markov-chain}.
\begin{enumerate}
    \item $\varphi_1 \circ f_1$: We select two colors, red and blue, then do nothing (case (2)(a)). Since there is no deficient vertex, we can change the color of edge $e_1$ from blue to red. Then we eliminate all deficiencies by flipping the colors along a maximal alternating path between the two deficient vertices emerged. We can conclude that $P(\varphi_1\circ f_1) =  \frac{1}{4}\times \frac{1}{3} \times \frac{1}{3} \times \frac{1}{2} \times \frac{1}{8} \times \frac{1}{2}$. Indeed, $\frac{1}{4}$ is the probability of case (2), there are ${3\choose 2} = 3$ possible pair of colors, $\frac{1}{3}$ is the probability of subcase (a) in case (2), $\frac{1}{2}$ is the probability of changing the color of an edge when there is no deficient vertex in subcase (2)(b), and there are $8$ edges with color blue or red, and finally, we select the path in which $A_1$ and $X_1$ differ with probability $\frac{1}{2}$.
    \item $\varphi_2\circ f_2\circ \varphi_1^{-1}$: We select two colors, red and blue, then we select the subpath in which $X_1$ and $A_1$ differ (case (2)(b)). We flip the colors along this path. Since there are two deficient vertices incident to edge $e_1$, we select the vertex incident to both $e_1$ and $e_2$, then then we select the edge $e_2$, and flip its color from red to blue. Finally, we eliminate all deficiencies by flipping the colors along a maximal alternating path. We can conclude that $P(\varphi_2 \circ f_2 \circ \varphi_1^{-1}) = \frac{1}{2} \times \frac{1}{3} \times \frac{1}{3} \times \frac{1}{57} \times \frac{1}{4} \times \frac{1}{2}$. Indeed, $\frac{1}{4}$ is the probability of case (2), there are ${3\choose 2} = 3$ possible pair of colors, $\frac{1}{3}$ is the probability of subcase (b) in case (2), there are ${8\choose 2} \times 2$ subpaths in the subgraph with color blue and red and $1$ cycle, thus we have to select one of the $57$ possibilities, selecting edge $e_2$ has probability $\frac{1}{4}$ (selecting uniformly one of the deficient vertices and uniformly one of the edges with repeated color), and finally, we select the path in which $A_2$ and $X_2$ differ with probability $\frac{1}{2}$.
    \item $\varphi_3 \circ f_3 \circ \varphi_2^{-1}$: We select three colors, red, blue and green, and we select green as the distinguished color $c''$. Then we select the path in which $X_2$ and $A_2$ differ, and swap the red and blue colors. We select the alternating blue-green cycle containing the edge $e_3$ and flip the green and blue colors in it. Then we eliminate the deficiencies by flipping the colors along the path with which $A_3$ and $X_3$ differ. We can conclude that $P(\varphi_3\circ f_2\varphi_2^{-1}) = \frac{1}{4}\times \frac{1}{3} \times \frac{1}{3}\times \frac{1}{57} \times \frac{1}{4} \times \frac{1}{2}$. Indeed, the probability of case (3) is $\frac{1}{4}$, there is one way to select three colors, and there are three ways to select one of them distinguished, there are ${8\choose 2} \times 2$ subpaths in the subgraph with color blue and red and $1$ cycle, thus we have to select one of the $57$ possibilities, selecting edge $e_3$ has probability $\frac{1}{4}$ (selecting uniformly one of the deficient vertices and uniformly one of the edges with repeated color), and finally, we select the path in which $A_3$ and $X_3$ differ with probability $\frac{1}{2}$.
    \item $\varphi_4 \circ f_4 \circ \varphi_3^{-1}$: We select two colors, red and blue. Then we select the path in which $X_3$ and $A_3$ differ, and flip the colors of the edges in it. Then we select $e_3$, and flip its color from blue to red. Finally, we eliminate all deficiencies by flipping the colors along a maximal alternating path. We can conclude that $P(\varphi_4 \circ f_4 \circ \varphi_3^{-1}) = \frac{1}{4} \times \frac{1}{3} \times \frac{1}{3} \times \frac{1}{26} \times \frac{1}{4} \times \frac{1}{2}$. Indeed, $\frac{1}{4}$ is the probability of case (2), there are ${3\choose 2} = 3$ possible pair of colors, $\frac{1}{3}$ is the probability of subcase (b) in case (2), there are ${4\choose 2} \times 2 \times 2 = 24$ subpaths in the subgraph with color blue and red and $2$ cycles, thus we have to select one of the $26$ possibilities, selecting edge $e_3$ has probability $\frac{1}{4}$ (selecting uniformly one of the deficient vertices and uniformly one of the edges with repeated color), and finally, we select the path in which $A_4$ and $X_4$ differ with probability $\frac{1}{2}$.
    \item $f_5\circ \varphi_4^{-1}$:  We select two colors, red and blue. Then we select the path in which $X_4$ and $A_4$ differ, and flip the colors of the edges in it. Then we select $e_4$, and flip its color from red to blue. We can conclude that $P(\varphi_4 \circ f_4 \circ \varphi_3^{-1}) = \frac{1}{4} \times \frac{1}{3} \times \frac{1}{3} \times \frac{1}{26} \times \frac{1}{4}$. Indeed, $\frac{1}{4}$ is the probability of case (2), there are ${3\choose 2} = 3$ possible pair of colors, $\frac{1}{3}$ is the probability of subcase (b) in case (2), there are ${4\choose 2} \times 2 \times 2 = 24$ subpaths in the subgraph with color blue and red and $2$ cycles, thus we have to select one of the $26$ possibilities, selecting edge $e_4$ has probability $\frac{1}{4}$ (selecting uniformly one of the deficient vertices and uniformly one of the edges with repeated color).
\end{enumerate}

We can put $M(G,k)$ into a Metropolis-Hastings algorithm to design a Markov chain Monte Carlo converging to the uniform distribution of edge $k$-colorings of a bipartite graph. We can use the variant introduced by Lunter \emph{et. al} \cite{lunteretal2005}, that is, when an edge coloring $C_2$ is proposed from $C_1$ in a way $w$, we obtain the inverse way $w'$ transforming $C_2$ back to $C_1$, and use the probabilities $P(C_1,w'|C_2)$ and $P(C_2,w,|C_1)$ in the Metropolis-Hastings ratio. Furthermore, since the target distribution $\pi$ is the uniform one, $g$ can be set to any constant function, and is cancelled in the Metropolis-Hastings ratio, which simply becomes
$$
\frac{P(C_1,w'|C_2)}{P(C_2,w|C_1)},
$$
where edge $k$-coloring $C_2$ is proposed from edge $k$-coloring  $C_1$ in way $w$.
Observe that the smallest acceptance probability is the minimum of the Metropolis-Hastings ratio, so the maximum of the inverse of the acceptance ratio is the maximum of the inverse of the Metropolis-Hastings ratio, and due to symmetry, it is simply the maximum of the Metropolis-Hastings ratio. Below we give upper bound for this maximum when $M(G,k)$ is used in the Metropolis-Hastings algorithm.

Since for all $C_1$ and $C_2$ and possible transformation way $w$ between them, $P(C_1,w'|C_2)<1$, the maximum of the Metropolis-Hastings ratio is at most 
$$
\frac{1}{P(C_2,w|C_1)}.
$$
This already would give a polynomial upper bound for the inverse of the acceptance ratio, however, with a more careful analysis, a better upper bound could be found.

If nothing is done in the Markov chain, then the Metropolis-Hastings ratio is $1$. Otherwise, a step in the Markov chain does the following:
\begin{enumerate}
    \item It selects either $2$ or $3$ colors.
    \item Based on the selected colors, it creates at most two deficient vertices. We denote this transformation $\varphi_1^{-1}$.
    \item It perturbs the current configuration. We call this perturbation $f$.
    \item Eliminates all the deficiencies. We denote this transformation $\varphi_2$.
\end{enumerate}
Observe that in the reverse transformation, the same colors must be selected, therefore, the probability of selecting the particular colors cancels in the Metropolis-Hastings ratio, thus it simplifies to
$$
\frac{P(\varphi_2^{-1})P(f^{-1})P(\varphi_1)}{P(\varphi_1^{-1})P(f)P(\varphi_2)}
$$

We give individual upper bounds on $\frac{P(\varphi_2^{-1})}{P(\varphi_2)}$, $\frac{P(f^-1)}{P(f)}$ and $\frac{P(\varphi_1)}{P(\varphi_1^{-1})}$, and their product is an upper bound on the Metropolis-Hastings ratio.

We claim that
$$
\frac{P(\varphi_2^{-1})}{P(\varphi_2)} \le \frac{1}{P(\varphi_2)} \le 4.
$$
Indeed, any probability is smaller than $1$, thus the first inequality holds. $\varphi_2$ eliminates at most two deficiencies. It must select uniformly one of the edges with the same color incident to a deficient vertex, and it has probability $\frac{1}{2}$. Once the edge is selected, the maximal alternating path with the two prescribed colors is defined unequivocally, and thus with probability $1$. Since at most twice an edge incident to a deficient vertex must be selected during the procedure $\varphi_2$, its probability has a minimum $\frac{1}{4}$, and thus the second inequality.

Now we show that
$$
\frac{P(\varphi_1)}{P(\varphi_1^{-1})} \le \frac{1}{P(\varphi_1^{-1})} \le 12{|V|\choose 2}.
$$
The reasoning for the first inequality is the same as above. $\varphi_1^{-1}$ creates at most two deficient vertices by selecting at most two alternating sub-paths in the subgraph $H$ defined by two colors, and then flipping the colors along these paths. If one path is selected, then this option is chosen with $\frac{1}{3}$ probability. After this,  a sub-path is selected uniformly. A sub-path is defined by its end vertices, therefore there cannot be more than ${|V|\choose 2}$ sub-paths, so the probability of selecting one sub-path is at least $\frac{1}{{|V|\choose 2}}$. If two sub-paths are selected, then this option is chosen with $\frac{1}{3}$ probability. In this case, both of the sub-paths must have an end vertex which is an end vertex also in $H$. Thus only the other end points must be selected, and there are $2$ options for this. Therefore, the number of pair of sub-paths with the required properties is at most $4{|V|\choose 2}$, so the probability of selecting one of them is at least $\frac{1}{4{|V|\choose 2}}$. Therefore, it indeed holds that
$$
\frac{1}{P(\varphi_1^{-1})}\le \frac{1}{\frac{1}{3}\times \min\left\{\frac{1}{{|V|\choose 2}}, \frac{1}{4{|V|\choose 2}}\right\}} = 12 {|V|\choose 2}. 
$$

Finally, we show that
$$
\frac{P(f^{-1})}{P(f)} \le \frac{1}{P(f)} \le 4|V|.
$$
The reasoning for the first inequality is the same as above. The $f$ transformation does one of the following:
\begin{enumerate}
    \item When two colors are selected and there is a deficient vertex, it selects a deficient vertex, then it selects one of its edges causing deficiency, then it flips its color. It has probability at least $\frac{1}{4}$ ($\frac{1}{2}$ for selecting uniformly from two deficient vertices and $\frac{1}{2}$ for selecting uniformly from two edges).
    \item When two colors are selected and there is no deficient vertex, then with $\frac{1}{2}$ probability, it selects uniformly a an edge with one of the two prescribed colors, and it flips its color. Observe that the number of edges with any of a set of two colors is at most the number of vertices in an edge coloring. Therefore, the probability of this transformation is at least $\frac{1}{2|V|}$.
    \item When two colors are selected and there is no deficient vertices, it does nothing with probability $\frac{1}{2}$.
    \item When three colors are selected then either a deficient vertex is selected with at least $\frac{1}{2}$ probability or a non-deficient vertex is selected with at least $\frac{1}{2|V|}$ probability. If a deficient vertex is selected, then one of the edges causing deficiency is selected with $\frac{1}{2}$ probability, then colors are flipped along an unequivocally defined alternating path. If a non-deficient vertex is selected, then the colors are flipped along one of at most two paths. The path on which the colors are flipped is selected with at least $\frac{1}{2}$ probability.
\end{enumerate}
Therefore $P(f)$ is indeed lower bounded by $\frac{1}{4|V|}$, thus its inverse is upper bounded by $4|V|$.
\\

In this way, we just proved the following theorem.
\begin{thm}\label{theo:inv-acc-rate-general}
Let $M(G,k)$ be the Markov chain on the edge $k$-colorings on the bipartite graph $G = (V,E)$. Then the inverse of the acceptance ratio in the Metropolis-Hastings algorithm applied with the constant function $g$ and with Markov chain $M(G,k)$ is upper bounded by $96|V|^2(|V|-1)$.
\end{thm}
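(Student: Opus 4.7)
My plan is to exploit the structure of a step of $M(G,k)$ as a composition of four independent-looking substages: (1) selecting two or three colors, (2) applying an ``inverse cleanup'' $\varphi_1^{-1}$ that introduces at most two deficiencies by flipping at most two alternating sub-paths, (3) performing a core perturbation $f$ (one of the subcases \emph{i)}--\emph{iv)} in Definition~\ref{def:markov-chain}), and (4) applying the cleanup $\varphi_2$ guaranteed by Lemma~\ref{lem:transf-from-almost-to-proper}. The key observation is that in the reverse transition from $C_2$ back to $C_1$, the \emph{same} set of colors must be chosen, so the color-selection factor cancels identically in the Metropolis-Hastings ratio. Since $g$ is constant (we are targeting the uniform distribution), the ratio reduces to the product of three factors $P(\varphi_2^{-1})/P(\varphi_2)$, $P(f^{-1})/P(f)$, and $P(\varphi_1)/P(\varphi_1^{-1})$, and each can be bounded crudely by the reciprocal of the forward probability alone.

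Next I would bound each factor. For $\varphi_2$: it eliminates at most two deficiencies, and for each deficiency it just needs to pick one of two repeated-color edges uniformly (the remainder of the alternating path is forced), so $P(\varphi_2) \ge (1/2)^2 = 1/4$. For $\varphi_1^{-1}$: it either flips one sub-path (subcase (b), chosen with probability $1/3$), or flips two sub-paths anchored at endpoints of $H$ (subcase (c), chosen with probability $1/3$). Sub-paths of a graph with max degree $2$ are identified by their endpoint pairs, so there are at most $\binom{|V|}{2}$ single sub-paths and at most $4\binom{|V|}{2}$ ordered pairs anchored at $H$-endpoints; this gives $P(\varphi_1^{-1}) \ge \frac{1}{3}\cdot \frac{1}{4\binom{|V|}{2}}$, hence the factor at most $12\binom{|V|}{2}$. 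For $f$: I would case-split on the four subcases of Definition~\ref{def:markov-chain}. Subcases \emph{i)} and \emph{iii)} contribute at least $1/4$ (choice of deficient vertex and incident repeated-color edge); subcase \emph{ii)} contributes at least $1/(2|V|)$ since the number of edges with one of two fixed colors is at most $|V|$; subcase \emph{iv)} combines the $1/(2|V|)$ cost of picking a non-deficient vertex with an at-most-$1/2$ cost for the auxiliary edge and path selection. The uniform lower bound is therefore $P(f) \ge 1/(4|V|)$.

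Multiplying the three bounds gives
\[
4 \cdot 12\binom{|V|}{2} \cdot 4|V| \;=\; 192 \cdot \frac{|V|(|V|-1)}{2} \cdot |V| \;=\; 96\,|V|^2(|V|-1),
\]
which is precisely the stated bound. Finally, since the acceptance probability is $\min\{1, \text{ratio}\}$, this gives the desired upper bound on the inverse acceptance ratio.

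The main obstacle, and the part requiring the most care, is the \emph{factorization} of the step probabilities and the counting in the bound on $P(\varphi_1^{-1})$. One must confirm that the decomposition $\varphi_2 \circ f \circ \varphi_1^{-1}$ really is well-defined as an ordered composition with independent random choices (so that the probabilities multiply) and that the cleanup $\varphi_2$ in the forward direction truly corresponds to the inverse of some legal $\varphi_1^{-1}$ in the backward direction, up to the ambiguity already accounted for in Lemma~\ref{lem:transf-from-almost-to-proper}. The subtlety is that a single alternating sub-path may have both endpoints internal to $H$ or one endpoint at an $H$-endpoint, and the two modes (single path vs.\ pair of paths) must be correctly matched between forward and backward; careful bookkeeping of these cases is what underlies the factor $12\binom{|V|}{2}$ rather than something larger. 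Once this is handled, the remaining estimates are routine.
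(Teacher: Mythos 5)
Your proposal follows essentially the same route as the paper's own argument: the identical decomposition of a step into color selection, $\varphi_1^{-1}$, $f$, and $\varphi_2$, cancellation of the color-selection factor, and the same three bounds ($P(\varphi_2)\ge \tfrac14$, $P(\varphi_1^{-1})\ge \tfrac{1}{12\binom{|V|}{2}}$, $P(f)\ge \tfrac{1}{4|V|}$) multiplied to give $96|V|^2(|V|-1)$. The counting details and case analysis match the paper's proof, so the plan is correct as stated.
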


\section{Latin rectangles}\label{section4}

\begin{dfn}
A \emph{Latin rectangle} is a $k\times n$ table such that each row is a permutation of $[n]$ and each column has no repeats. When $k=n$ we call it a \emph{Latin square}. The \emph{completion} of an $k\times n$ Latin rectangle $R$ is an $n\times n$ Latin square such that its first $k$ rows is $R$.
\end{dfn}

\begin{obs}\label{bij}
For any $(n-k)\times n$ Latin rectangle $R$, there is an $k$-regular bipartite graph $G$ with $n$ vertices in both vertex classes such that completions of $R$ and edge-$k$-colorings of $G$ are in one-to-one correspondence.
\end{obs}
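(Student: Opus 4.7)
The plan is to construct an explicit bijection from completions of $R$ to edge $k$-colorings of a canonical $k$-regular bipartite graph. I would let the two vertex classes of $G$ be $U = \{u_1, \ldots, u_n\}$, one vertex per column of $R$, and $W = \{w_1, \ldots, w_n\}$, one vertex per symbol in $[n]$; and I would include the edge $u_j w_s$ if and only if symbol $s$ does not appear in column $j$ of $R$.

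First I would verify $k$-regularity. Each column of $R$ has $n-k$ entries, all distinct, so exactly $k$ symbols are missing from each column; hence $\deg(u_j) = k$ for every $j$. Each symbol $s$ appears in every row of $R$ exactly once (since rows are permutations of $[n]$), so in total $n-k$ times in $R$, each time in a distinct column; hence $s$ is missing from exactly $k$ columns, and $\deg(w_s) = k$ for every $s$.

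Next I would set up the correspondence. Identify the $k$ missing row indices $\{n-k+1, \ldots, n\}$ with a palette of $k$ colors $\{c_{n-k+1}, \ldots, c_n\}$. Given a completion $L$ of $R$, for each edge $u_j w_s \in E(G)$ color it with the unique row $r \in \{n-k+1, \ldots, n\}$ such that $L_{r,j} = s$; such $r$ exists and is unique because $s$ appears in column $j$ of $L$ exactly once and, by construction of $G$, not in rows $1, \ldots, n-k$. Conversely, given an edge $k$-coloring $C$ of $G$, for each edge $u_j w_s$ of color $c_r$ set $L_{r,j} := s$, and keep the first $n-k$ rows of $L$ equal to $R$. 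The two maps are visibly inverse by construction.

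The remaining verifications are routine. For the forward map: two edges sharing $u_j$ cannot receive the same color because distinct cells in the same column of the Latin square $L$ lie in distinct rows; two edges sharing $w_s$ cannot receive the same color because symbol $s$ appears at most once in each row. For the backward map: since $G$ is $k$-regular bipartite and properly $k$-edge-colored, K\"onig's theorem guarantees that each color class is a perfect matching, so each new row of $L$ assigns every symbol in $[n]$ to exactly one column and hence is a permutation; and properness at each $u_j$ ensures that the $k$ new symbols placed in column $j$ are distinct. The only mildly nontrivial point is this last appeal to K\"onig's theorem (equivalently, to the fact that a $k$-regular bipartite graph edge-$k$-colored must split into $k$ perfect matchings), which is what turns a partial assignment into a bona fide Latin square completion; the rest is bookkeeping.
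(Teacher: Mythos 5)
Your construction is the same as the paper's (one vertex class for columns, one for symbols, edges for symbols missing from a column, colors indexed by the completed rows), and your verifications match the paper's up to cosmetic differences, so the proof is correct and essentially identical in approach. One small remark: the appeal to K\"onig's theorem is unnecessary overkill — since each vertex has degree $k$ and the coloring is proper with $k$ colors, every vertex meets each color exactly once, so each color class is a perfect matching by a direct pigeonhole count, which is exactly the argument the paper uses.
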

\begin{proof}
Let the two parts of vertices of $G$ be $A_1,A_2,\ldots,A_n$ and $B_1,\ldots,B_n$ and define $A_iB_j\in E(G)$ if and only if number $i$ does not appear in the $j$-th column of $R$. This gives us a desired $k$-regular graph for any given Latin rectangle $R$.

Indeed, it is clear that each $B_i$ is of degree $k$. For $A_i$'s, observe that they all have degree at least $k$. Otherwise, suppose that number $i_0$ is missed in less than $k$ columns (i.e. it appears in at least $n-k+1$ columns). We know that there are only $n-k$ rows, so by the Pigeonhole principle there must be two $i_0$'s in the same row, which contradicts our assumption that $R$ is a Latin rectangle. Because $|E|=nk$ and the total degree of $A_i$'s is $nk$, no $A_i$ can have degree more than $k$ thus every vertex of $G$ is $k$-regular.

Now we give the bijection: if and only if number $i$ is filled in the $(n-k+\ell,j)$-grid in $R$'s completion, we color edge $A_iB_j$ using color $c_{\ell}$ in $G$. For one direction, above discussion for $G$ already shows the pre-image is a completion of $R$. On the other hand, first of all, every $B_j$ is adjacent to one $c_{\ell}$-colored edge for all $k$. Then, by the $k$-regularity of $G$, each number appears $k$ times in the last $k$ rows of the table so each $A_i$ also has exactly one edge for every color. Thus the image is an edge $k$ coloring.
\end{proof}
\begin{rmk}
These objects are also in one-to-one correspondence with half-regular factorizations of complete $(k+n)$-bipartite graphs with edge $n$ colorings, connecting our work to \cite{Aksenetal2017}.
\end{rmk}




%

We now give a simplified version of the Markov chain described in Definition \ref{def:markov-chain} on the solution space of edge $k$-coloring of $k$-regular bipartite graphs, that is, Latin square completions of $(n-k)\times n$ Latin rectangles. In that, we utilize the following observation.

\begin{obs}
Let $C$ be an almost edge $k$-coloring of an $k$ regular bipartite graph. Then there are exactly two deficient vertices in $C$, furthermore, they are the endpoints of a maximal alternating path with two colors.
\end{obs}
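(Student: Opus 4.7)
The plan is to handle the claim in two conceptual stages: first pinning down the number of deficient vertices as exactly two, then exhibiting the maximal alternating path. Stage one is a degree-counting argument. Fix any color $c$ and consider $\restr{G}{C,c}$. In a proper $k$-coloring of a $k$-regular bipartite graph each color class is a perfect matching, so every vertex has $c$-degree $1$. At a $(+c-c')$-deficient vertex $v$, the $c$-degree becomes $2$ and the $c'$-degree becomes $0$; because $v$ still has total degree $k$ and its other $k-2$ edges carry $k-2$ distinct colors, exactly one color is missing from $v$, namely $c'$. Since both sides of the bipartition must contribute the same total $c$-degree, a lone deficient vertex on one side would create an imbalance no remaining vertex can correct. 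Thus an almost coloring has $0$ or $2$ deficient vertices; under the implicit assumption that $C$ actually has deficiency, there are exactly two.

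Next I would pin down the colors that the two deficient vertices $v_1,v_2$ involve, by applying the balance argument color by color. If $v_1,v_2$ lie on opposite sides of the bipartition, the $\pm1$ contributions must match independently on each side, forcing both vertices to share the identical deficiency type $(+c-c')$. If $v_1,v_2$ lie on the same side, the $\pm1$ contributions must cancel, forcing $v_2$ to be $(+c'-c)$-deficient whenever $v_1$ is $(+c-c')$-deficient. In either case, a common unordered pair $\{c,c'\}$ is attached to both vertices.

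Then I would work inside the subgraph $H$ consisting of all edges colored $c$ or $c'$. Every non-deficient vertex contributes exactly one edge of each of these colors, and each $v_i$ contributes two edges of one color, so $H$ is $2$-regular and decomposes into disjoint cycles, all of even length by bipartiteness. I would argue by contradiction that $v_1,v_2$ lie in the same $H$-cycle: if the cycle through $v_1$ missed $v_2$, then all its internal vertices would be non-deficient and its colors would alternate $c,c',c,c',\ldots$, but at $v_1$ both incident edges share the same color; tracing the pattern around the cycle forces an odd length, contradicting bipartiteness.

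Finally, either of the two arcs of this cycle from $v_1$ to $v_2$ is a genuine alternating $(c,c')$-path, since its internal vertices are non-deficient. It is maximal: at each endpoint the color needed to extend the path is precisely the one absent at that vertex, by the deficiency types identified in the second stage. This yields the required maximal alternating two-color path with $v_1,v_2$ as its endpoints. The main obstacle, in my view, is the bookkeeping in the second stage when the two deficient vertices lie in different configurations (same side versus opposite sides), since the balance equations take slightly different forms; once these are aligned, the rest of the argument follows directly.
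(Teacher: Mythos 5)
Your proof is correct, but it takes a genuinely different route from the paper's. The paper argues locally: starting from a deficient vertex $v_1$ with repeated color $c$ and missing color $c'$, it follows the maximal alternating $(c,c')$-path through one of the two $c$-edges at $v_1$; the far endpoint must lack either $c$ or $c'$, and since that vertex has degree $k$ while only $k$ colors are available, a missing color forces a repeated one, so the far endpoint is deficient too; the definitional cap of at most two deficiencies then gives exactly two, and by construction they are the endpoints of the maximal alternating path. You instead argue globally: a color-by-color degree balance across the two vertex classes rules out a lone deficient vertex and pins down the relation between the two deficiency types (identical $(+c-c')$ types if $v_1,v_2$ lie in different classes, swapped types $(+c-c')$ and $(+c'-c)$ if they lie in the same class); then the two-color subgraph $H$ is $2$-regular, hence a disjoint union of even cycles, a parity argument forces both deficient vertices onto one cycle, and an arc of that cycle is the required maximal alternating path, maximality at each end coming from the missing color there. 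I checked your balance computations and the parity argument in both configurations; they are sound, and the case bookkeeping you were worried about does close correctly. Like the paper's proof, yours needs the tacit assumption that $C$ actually has a deficiency (the definition of an almost edge $k$-coloring allows zero deficient vertices), which you flag explicitly. What the two approaches buy: the paper's argument is shorter and produces the path immediately; yours is longer but yields extra structure for free, namely the precise relation between the two deficiency types and the fact that the deficient vertices share a cycle of $H$ --- in particular the same-class, swapped-type configuration that appears as the hypothesis of Observation~\ref{obs:no-contradiction-regular} is derived rather than assumed in your setup.
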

\begin{proof}
By definition, there is at least one deficient vertex, $v_1$, in an almost edge $r$-coloring. W.l.o.g., it has $(+c-c')$-deficiency. Let $e$ be one of the edges with color $c$ incident to $v_1$. Consider a longest alternating path with colors $c$ and $c'$ containing $e$. It ends in a vertex $v_2$ either with a $c$ edge, and then there is no $c'$ edge incident to $v_2$, or it ends with a $c'$ edge and then there is no $c$ edge incident to $v_2$. However, there are $k$ colors and $v_2$ has degree $k$, therefore one of the colors must be repeated on edges incident to $v_2$. That is, $v_2$ is a deficient vertex. Also by definition, there cannot be more than two deficient vertices in $C$.
\end{proof}
A consequence is that the $\varphi$ operations always eliminates two deficient vertices by flipping the colors along an alternating path. This allows us to simplify the Markov chain in Definition~\ref{def:markov-chain} to get an irreducible Markov chain on the edge $k$-colorings of $k$-regular bipartite graphs. We also need the following observation.
\begin{obs}\label{obs:no-contradiction-regular}
Let $C$ be an almost edge $k$-coloring of a $k$-regular bipartite graph such that $v_1$ has a $(+c-c')$-deficiency and $v_2$ has a $(+c'-c)$ deficiency. Furthermore, assume that $v_1$ and $v_2$ are in the same vertex class. Let $c'' \ne c, c'$ be a third color. Then the longest alternating walk with colors $c'$ and $c''$ that starts with color $c''$ in $v_2$ is a cycle.
\end{obs}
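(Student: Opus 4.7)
My plan is to analyze the local color structure at the three distinguished types of vertices and use the bipartiteness to force the walk back to $v_2$. Since the graph is $k$-regular and uses $k$ colors, every non-deficient vertex sees each of the $k$ colors exactly once among its incident edges, so in particular exactly one $c'$-edge and exactly one $c''$-edge. At $v_1$, on the other hand, the $(+c-c')$-deficiency forces no $c'$-edges and, since $v_1$ has degree $k$ with two $c$-edges and no $c'$-edges, exactly one $c''$-edge. At $v_2$, the $(+c'-c)$-deficiency gives two $c'$-edges, no $c$-edges, and exactly one $c''$-edge.

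I would then track the walk step by step. Start at $v_2 \in A$ via the unique $c''$-edge, landing at some $w_1 \in B$. At each non-deficient interior vertex the color we must use next (alternating between $c'$ and $c''$) is present exactly once and is distinct from the arrival edge, so the walk is uniquely extendable there. Using bipartiteness and the fact that $v_1,v_2$ both lie in $A$, the vertices visited at odd steps lie in $B$ and those visited at even steps lie in $A$; in particular, any visit to $v_1$ must occur at an even step, meaning the walk would need to arrive via a $c'$-edge. Since $v_1$ has no $c'$-edge, this is impossible, so $v_1$ never appears on the walk.

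The walk cannot get stuck at a non-deficient vertex (those always have the next required color available with a fresh edge), and since the graph is finite and we do not reuse edges on a maximal alternating walk, the walk must terminate somewhere. The only remaining candidate is $v_2$, reached at an even step via a $c'$-edge (the second $c'$-edge at $v_2$, the first having been consumed earlier or being unused so far along the walk). Once back at $v_2$ arriving via $c'$, the next step would require an unused $c''$-edge, but the unique $c''$-edge at $v_2$ is precisely the one that initiated the walk; hence the walk cannot be extended, and its endpoint coincides with its starting vertex $v_2$, giving a closed walk, i.e.\ a cycle.

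The main technical point I expect to address carefully is justifying that no edge is traversed twice, so that the walk is well-defined and the claim ``returning to $v_2$ implies a cycle'' is valid. This follows from the standard observation that a maximal alternating walk on two fixed colors in which each interior vertex has at most one edge of each of those colors cannot revisit an edge without first revisiting a vertex with a previously used incident color-edge; the analysis above rules out any such revisitation except the final one at $v_2$. With this in hand, the walk is a simple alternating cycle through $v_2$, as claimed.
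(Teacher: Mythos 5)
Your proof is correct and follows essentially the same route as the paper's: $k$-regularity forces a unique continuation at every non-deficient vertex, and since $v_1$ lies in the same vertex class as $v_2$, the walk could only reach $v_1$ via a $c'$-edge, which $v_1$ lacks, so the walk must close up at $v_2$. Your additional care about edge reuse and the simplicity of the resulting cycle is a welcome elaboration of details the paper leaves implicit, but it is not a different argument.
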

\begin{proof}
Since the graph is $k$-regular, each non-deficient vertex has exactly one $c'$ edge and one $c''$ edge. That is, the walk continues until it arrives to a deficient vertex or it travels back to $v_2$. Since $v_1$ and $v_2$ are in the same vertex class, the alternating walk could arrive to $v_1$ with a $c'$ edge, however, $v_1$ does not have an incident $c'$ edge. Therefore, the walk arrives back to $v_2$.
\end{proof}
\begin{dfn}
Let $G$ be a $k$-regular equi-bipartite graph. We define a Markov chain $M(G,k)$ on the edge $k$-colorings of $G$. Let the current coloring be $C$. Then draw a random edge $k$-coloring in the following way.
\begin{enumerate}
    \item With probability $\frac{1}{2}$, do nothing (so the defined Markov chain is a Lazy Markov chain).
    \item With probability $\frac{1}{4}$, select two colors $c$ and $c'$ from the set of colors uniformly among the ${k\choose 2}$ possible unordered pairs. Consider the subgraph of $G$ with colors $c$ and $c'$, let it be denoted by $H$. Note that $H$ consists of disjoint alternating cycles. Then select one of them:
    \begin{enumerate}
        \item With probability $\frac{1}{2}$, do nothing.
        \item With probability $\frac{1}{2}$, uniformly select one of the possible connected subgraphs of $H$.
    \end{enumerate}

    Flip the colors in the selected sub-path. If there is any deficient vertex, select one of them uniformly, select the edge with the same color incident to the selected vertex uniformly, and change its color from $c$ to $c'$ or $c'$ to $c$.
    Otherwise, with $\frac{1}{2}$ probability, select either a $c$-edge or a $c'$-edge, uniformly from all edges with color $c$ and $c'$, and change its color to the opposite ($c'$ or $c$), and with $\frac{1}{2}$ probability, do nothing. Eliminate all deficiency by flipping the colors on the appropriate alternating path, selecting uniformly from the two neighbor edges with the same color incident to deficient vertices.
    \item With probability $\frac{1}{4}$ select three colors uniformly from the all possible ${k\choose 3}$ unordered pairs, and select uniformly one from the three colors. Let the distinguished color be denoted by $c''$, and let the other two colors be denoted by $c$ and $c'$. Consider the subgraph consisting of the colors $c$ and $c'$ and let it be denoted by $H$. Its non-trivial components are again copies of cycles. Then select one of the following:
    \begin{enumerate}
        \item With probability $\frac{1}{2}$, do nothing.
        \item With probability $\frac{1}{2}$, uniformly select a sub-path of $H$ with even length (even number of edges).
    \end{enumerate}
    Flip the colors in the selected sub-path. The number of deficiency must be even. If there are two deficient vertices, select one of them uniformly. If there is no deficient vertex, then select uniformly a non-deficient vertex.
    
    If a deficient vertex  $v$ is selected,  let its repeated color be denoted by $\tilde{c}$. Consider the maximal alternating cycle with colors $\tilde{c}$ and $c''$ that contains $v$ (from Observation~\ref{obs:no-contradiction-regular} we know that this cycle exists). Flip the colors in this cycle.
    
    If a non-deficient vertex is selected, let its incident edge with color $c''$ be denoted by $e$. Select uniformly from the incident edges with color $c$ or $c'$, and let it be denoted by $f$ and its color be $\tilde{c}$. Take the maximal alternating cycle with colors $c''$ and $\tilde{c}$ that contains $e$ and $f$. Flip the colors in this cycle.
    
    Eliminate all deficiency by flipping the colors on the appropriate alternating path, selecting uniformly from the two neighbor edges with the same color incident to deficient vertices.
\end{enumerate}
\end{dfn}
We can state the following upper bounds on the diameter and the inverse of acceptance ratio.
\begin{thm}
Let $M(G,k)$ be the Markov chain on the edge $k$-colorings of a $k$-regular bipartite graph $G = (V,E)$. Then the diameter of $M(G,k)$ is upper bounded by $3|E|$ and the inverse of the acceptance ratio in the Metropolis-Hastings algorithm with the uniform distribution is upper bounded by $16|V|(|V|-1)$.
\end{thm}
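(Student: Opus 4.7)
My plan is to follow the two theorems proved in Section~\ref{section3}, adapting each bound to the simplifications afforded by $k$-regularity. Two structural facts will drive the analysis: (i) every color class in a proper edge $k$-coloring of a $k$-regular bipartite graph is a perfect matching, so for any two colorings $C_1,C_2$ the symmetric difference $H_i=\restr{G}{K_{i-1},c_i}\oplus\restr{G}{C_2,c_i}$ is a disjoint union of even cycles with no path components; and (ii) by Observation~\ref{obs:no-contradiction-regular} and the observation preceding the simplified chain, every almost edge $k$-coloring encountered during a step of the chain has exactly two deficient vertices lying on a common cycle of the relevant two-color subgraph~$H$.

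For the diameter bound $3|E|$, I would reuse the large-/small-milestone construction of the general proof. The counting changes: because each $H_i$ is a union of cycles, exactly half of its edges carry color $c_i$, so the total number of edges contained in non-trivial components of all the $H_i$'s is at most $2|E|$ rather than $3|E|$. Applying Lemma~\ref{lem:path-between-small-milestones} to each cycle of length $s$ contributes at most $\lceil 3s/2\rceil\le 3s/2$ steps, so summing gives $\tfrac{3}{2}\cdot 2|E|=3|E|$. The special handling of the last transition $K_{l-2}\to K_{l-1}$ from the general proof still fits into this accounting because in the regular setting that step again only sees cycles.

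For the acceptance-ratio bound $16|V|(|V|-1)$, I would use the same decomposition of a chain step into $\varphi_1^{-1}\circ f\circ \varphi_2$ and bound the three factors of the Metropolis--Hastings ratio
\[
\frac{P(\varphi_2^{-1})}{P(\varphi_2)}\cdot\frac{P(f^{-1})}{P(f)}\cdot\frac{P(\varphi_1)}{P(\varphi_1^{-1})}
\]
separately. The bound $1/P(\varphi_2)\le 2$ is available because by (ii) the corrective alternating path is uniquely determined, and there are two distinct (vertex, edge) pairs producing it (one at each deficient endpoint), giving probability $2\cdot\tfrac12\cdot\tfrac12=\tfrac12$. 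The bound $1/P(\varphi_1^{-1})\le 4\binom{|V|}{2}$ comes from two improvements: the simplified chain has only two sub-cases (``do nothing'' versus ``flip one connected subgraph of $H$''), giving $\tfrac12$ in place of the $\tfrac13$ of the general chain, and with $H$ a disjoint union of cycles the number of connected subgraphs is at most $|V|(|V|-1)+O(|V|)$. If the remaining factor $P(f^{-1})/P(f)$ can be bounded by $4$, the product becomes $2\cdot 4\binom{|V|}{2}\cdot 4=16|V|(|V|-1)$.

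The main obstacle is controlling $P(f^{-1})/P(f)$. A naive bound of $4|V|$, as in Theorem~\ref{theo:inv-acc-rate-general}, would leave an extra factor of $|V|$. To eliminate it I would use the Lunter~\emph{et al.}\ variant and pair each forward way whose $f$-step contains a $1/|V|$ factor (the no-deficiency ``flip an arbitrary two-color edge'' branch in case~(2), or the ``pick an arbitrary non-deficient vertex'' branch in case~(3)) with a structurally identical reverse way, rather than with the shorter def-vertex branch. Concretely, a forward step that in case~(2) does ``SP: do nothing, $f$: flip edge $e$, fix: flip alternating path $Q$'' is paired with the reverse step ``SP: do nothing, $f$: flip edge $e$, fix: flip $Q$'' from $C_2$: both ends of $Q$ become deficient after the $f$-step in either direction, so both probabilities have the same $1/(2|V|)$ factor, and they cancel in the ratio. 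A parallel pairing works in case~(3). Carrying out this case-by-case matching, in the same spirit as but simpler than the one underlying Theorem~\ref{theo:inv-acc-rate-general}, should complete the proof and yield exactly $16|V|(|V|-1)$.
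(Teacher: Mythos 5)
Your diameter argument is the same as the paper's: each $H_i$ is a disjoint union of alternating cycles, so at most $2|E|$ edges lie in non-trivial components, and Lemma~\ref{lem:path-between-small-milestones} gives at most $\tfrac{3}{2}s$ steps per component, hence $3|E|$. For the acceptance ratio, however, you take a genuinely different route. The paper keeps the crude bound $P(\varphi_2)\ge \tfrac14$ and instead bounds the \emph{product} $P(\varphi_1^{-1})P(f)$ from below by $\bigl(8\binom{|V|}{2}\bigr)^{-1}$, the point being that the $f$-branches carrying a $1/|V|$ factor can only fire when no sub-path was flipped, i.e.\ when the subgraph-selection step cost only $\tfrac12$; so no modification of the forward/backward pairing is needed and the worst case is the sub-path-plus-deficient-vertex move. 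You instead bound the three factors separately: you sharpen $P(\varphi_2)\ge\tfrac12$ (using that the unique corrective path is reachable from either deficient endpoint, which is legitimate in the regular case by Observation~\ref{obs:no-contradiction-regular} and the preceding observation), and you eliminate the $|V|$ from the $f$-factor by re-choosing the Lunter-et-al.\ bijection $w\mapsto w'$ so that a ``no-deficiency, recolor an arbitrary edge'' (resp.\ ``pick a non-deficient vertex'') forward branch is paired with the structurally identical reverse branch, making the $1/|V|$ selection probabilities cancel (indeed, for those pairs the ratio is exactly $1$, since the number of $c/c'$-edges and of vertices is the same in both colorings). Arithmetically both accountings give $4\cdot 8\binom{|V|}{2}=2\cdot 4\binom{|V|}{2}\cdot 4=16|V|(|V|-1)$. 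What your route buys is a tighter treatment of the cheap moves; what it costs is that the re-pairing must actually be exhibited as a well-defined bijection compatible with the decomposition into $\varphi_1^{-1}, f, \varphi_2$ -- you only sketch this, whereas the paper's joint bound sidesteps it entirely. One further bookkeeping caveat: since case (2)(b) selects uniformly among \emph{all} connected subgraphs of $H$ (sub-paths and full cycles, two arcs per pair of endpoints on a cycle), your own count $|V|(|V|-1)+O(|V|)$ does not quite give $1/P(\varphi_1^{-1})\le 4\binom{|V|}{2}$, so as stated your final constant exceeds $16|V|(|V|-1)$ by a lower-order term; to recover the exact constant you would either need to absorb the cycle selections (e.g.\ note that under your pairing the full-cycle moves have ratio $1$ and so never attain the worst case) or tighten the path count as the paper does.
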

\begin{proof}
In each subgraph $H_i$, in each non-trivial component half of the edges have color $c_i$ in $C_2$. Indeed, observe that each non-trivial component is an alternating cycle with edges having color $c_i$ in $C_1$ and $C_2$. Therefore, the total number of edges of the non-trivial components in all $H_i$'s is at most $2|E|$. Since for each component the number of necessary transformations is at most $\frac{3}{2}s$, where $s$ is the number of edges in the given component, $3|E|$ number of steps are sufficient to transform any edge coloring to any other one.

We can decompose the proposal and backproposal probabilities in the same way as we did in case of Theorem~\ref{theo:inv-acc-rate-general}. Then the probabilities of selecting colors again cancel. We still have the bound 
$$
\frac{P(\varphi_2^{-1})}{P(\varphi_2)} \le \frac{1}{4}.
$$
However, we can give a better upper bound on
$$
\frac{P(\varphi_1)P(f^{-1})}{P(\varphi_1^{-1}P(f))} \le \frac{1}{P(\varphi_1^{-1}P(f))} \le 8 {|V|\choose 2}.
$$
Indeed, since $G$ is $k$-regular and its edges are colored with $k$ colors, there are deficient vertices after transformation $\varphi_1^{-1}$ if and only if an alternating path is selected and the edges are flipped. The probability for that is at least $\frac{1}{2{|V|\choose 2}}/\frac{1}{2}$ for choosing the option to select a path, and there are at most ${|V|\choose 2}$ possible paths, since the two end vertices define the alternating path unequivocally (recall that the two colors in the alternating path are fixed). In such a case, there are two deficient vertices, one of them is selected uniformly and one of its edges with duplicated colors is selected uniformly, and the color of this edge is changed in a prescribed way. Note that this change is the $f$ transformation. Therefore, in this case, $P(f) = \frac{1}{4}$. We get that 
\begin{equation}
P(\varphi_1^{-1}P(f)) \ge \frac{1}{8{|V|\choose 2}} \label{eq:combined-case-I}
\end{equation}

If no path is selected, then the probability for is $\frac{1}{2}$. Then a non-deficient vertex is selected uniformly, its probability is $\frac{1}{|V|}$. Then one of the edges incident to the selected vertex and having one of the two prescribed colors is selected uniformly. This has probability $\frac{1}{2}$. The color of the selected edge is changed in a prescribed way. Therefore, we get that  
\begin{equation}
P(\varphi_1^{-1}P(f)) \ge \frac{1}{4|V|} \label{eq:combined-case-II}
\end{equation}
Since $|V| \ge 2$, we have $4|V| \le 8 {|V|\choose 2}$.

Putting the inequalities together, we obtain the claimed bound.
\end{proof}

\section{Conclusion}
We considered the solution space of edge-colorings of any general bipartite graph and explicitly constructed an irreducible Markov chain $M(G,k)$ on the edge $k$-colorings of a bipartite graph $G$. We showed that the diameter of $M$ is linearly bounded by the number of edges, and when we apply the Metropolis-Hastings algorithm to it so that the modified chain, $\tilde{M}(G,k)$, converges to the uniform distribution, the inverse of acceptance ratio is bounded by a cubic function of the number of vertices. A special case of our work provides a Markov chain Monte Carlo method to sample completions of Latin squares.

Possible further work includes investigating the speed of convergence of the Markov chain $\tilde{M}(G,k)$, that is, whether it is rapidly mixing or not. 
The natural conjecture is that $\tilde{M}(G,k)$ is rapidly mixing, based on the proved properties on the diameter and the bound on the inverse of the acceptance ratio. The Markov chain $\tilde{M}(G,k)$ is similar to the Markov chains invented by Jacobson and Matthew \cite{JacobsonMatthew1996} and Aksen \emph{et al.} \cite{Aksenetal2017}. Neither of these chains are proved to be rapidly mixing, although rapid mixing is conjectured. The fact that the rapid mixing is a $25$ year old open question in case of the Jacobson-Matthew Markov chain on Latin squares indicates that resolving these open questions might be extremely hard. A special case of the Markov chain introduced in this paper is the Markov chain on completions $(n-3)\times n$ Latin rectangles, or equivalently, on edge $3$-colorings of $3$-regular bipartite graphs. The structure of the solution space of edge $3$-colorings of $3$-regular bipartite graphs might be simpler than the structure of the Latin squares. Therefore, there is a hope that proving rapid mixing of the Markov chain on edge $3$-colorings of $3$-regular bipartite graphs might be easier.

\section*{Acknowledgement}
I.M. was supported by NKFIH grants KH126853, K132696 and SNN135643. Dr. C.F. Bubb is thanked for his support. The project is a continuation of the work done at the 2019 Budapest Semesters in Mathematics. Both authors would like to thank the BSM for running the program.

\bibliographystyle{elsarticle-num}

\begin{thebibliography}{00}

\bibitem{Aksenetal2017} Aksen, M., Mikl\'os, I., Zhu, K. (2017) Half-regular factorizations of the complete bipartite graph. Discrete Applied Mathematics, 230:21--33.

\bibitem{Burkeetal2004} Burke, E.; De Werra, D.; Kingston, J. (2004), "5.6.5 Sports Timetabling", in J. L., Gross; Yellen, J. (eds.), Handbook of Graph Theory, CRC Press, p. 462.

\bibitem{Caietal2016} Cai, J-Y., Guo, H., Williams, T. (2016) The complexity of counting edge colorings and a dichotomy for some higher domain Holant problems, Res. Math. Sci., 3:18.

\bibitem{ColeHopcroft1982} Cole, R., Hopcroft, J. (1982) On edge-coloring bipartite graphs, SIAM J. Comput., 11:540--546.

\bibitem{ErlebachJansen2001} Erlebach, Thomas; Jansen, Klaus (2001), "The complexity of path coloring and call scheduling", Theoretical Computer Science, 255 (1–2): 33–50

\bibitem{Hastings1970} Hastings, W.K. (1970) Monte Carlo Sampling Methods Using Markov Chains and Their Applications, Biometrika, 57(1):97--109.

\bibitem{Holier1981} Holyer, I. (1981) The {NP}-completeness of edge-coloring. SIAM J. Comput., 10(4):718--720.

\bibitem{JacobsonMatthew1996} Jacobson, M.T., Matthews, P. (1996) Generating uniformly distributed random
latin squares, Journal of Combinatorial Designs, 4(6):404--437.

\bibitem{Jerrum2003} Jerrum, M. (2003) Counting, Sampling and Integrating: Algorithms and Complexity. Lectures in Mathematics. ETH Z\"urich. Birkh\"auser Verlag, Basel, Switzerland.

\bibitem{JerrumValiantVazirani1986} Jerrum, M., Valiant, L.G., Vazirani, V.V. (1986) Random generation of combinatorial structures from a uniform distribution. Theoretical Computer Science, 43:169--188.

\bibitem{KapoorRizzi2000} Kapoor, A., Rizzi, R. (2000) Edge-Coloring Bipartite Graphs, Journal of Algorithms,
34(2)390--396.

\bibitem{Konig1916} K\"onig, D. (1916) Über Graphen und ihre Anwendung auf Determinantentheorie und Mengenlehre, Mathematische Annalen, 77(4):453--465.


\bibitem{Lakic} Lakic, N., (2001) The application of Latin square in agronomic research, Journal of Agricultural Sciences, 46(1):71--77.

\bibitem{LevenGalil1983} Leven, D., Galil, Z. (1983) {NP}-completeness of finding the chromatic index of regular graphs. Journal of Algorithms, 4(1):35--44.

\bibitem{lunteretal2005} Lunter, G.A., Mikl\'os, I., Drummond, A., Jensen, J.L., Hein, J. (2005) Bayesian Coestimation of Phylogeny and Sequence Alignment BMC Bioinformatics, 6:83.

\bibitem{mms2010} Mikl\'os, I., M\'elyk\'uti, B., Swenson, K. (2010) The Metropolized Partial Importance Sampling MCMC mixes slowly on minimum reversal rearrangement paths ACM/IEEE Transactions on Computational Biology and Bioinformatics, 4(7):763-767. 

\bibitem{Metropolisetal1953} Metropolis, N., Rosenbluth, A.W., Rosenbluth, M.N., Teller, A.H., Teller, E.(1953) Equations of State Calculations by Fast Computing Machines. Journal
of Chemical Physics, 21(6):1087--1092.

\bibitem{Miklos2019} Mikl\'os, I. (2019) Computational complexity of counting and sampling. CRC Press, Boca Raton, USA.

\bibitem{Robertsonetal1996} Robertson, N., Sanders, D.P., Seymour, P., Thomas, R. (1996) Efficiently four-coloring planar graphs. In Proc. 28th Symposium on
Theory of Computing, pages 571--575.

\bibitem{SandersZhao2001} Sanders, D.P., Zhao, Y. (2001) Planar graphs of maximum degree $7$ are class {I}. Journal of Combinatorial Theory, Series B, 83:201--212.

\bibitem{Skiena2008} Skiena, Steven S. (2008), "16.8 Edge Coloring", The Algorithm Design Manual (2nd ed.), Springer-Verlag, pp. 548–550.

\bibitem{Tait1978} Tait, P.G. (1878) On the coloring of maps. Proc. Royal Soc. Edinburgh Sect. A, 10:501--503.

\bibitem{Vizing1964} Vizing, V.G. (1964) On an estimate of the chromatic class of a p-graph, Diskret. Analiz., 3:25--30.

\bibitem{Vizing1965} Vizing, V.G. (1965) Critical graphs with given chromatic class, Metody Diskret. Analiz., 5: 9--17. 

\bibitem{Williamsonetal1997} Williamson, D. P.; Hall, L. A.; Hoogeveen, J. A.; Hurkens, C. A. J.; Lenstra, J. K.; Sevast'janov, S. V.; Shmoys, D. B. (1997), "Short shop schedules", Operations Research, 45 (2): 288–294.
\end{thebibliography}
\section*{References}

\end{document}